\documentclass[reqno, 11pt]{amsart}

\usepackage{amssymb}
\usepackage{verbatim}


\DeclareMathAccent{\mathring}{\mathalpha}{operators}{"17}

\newcommand{\mysection}[1]{\section{#1}
\setcounter{equation}{0}}

\newtheorem{theorem}{Theorem}[section]
\newtheorem{corollary}[theorem]{Corollary}
\newtheorem{lemma}[theorem]{Lemma}

\theoremstyle{definition}
\newtheorem{remark}[theorem]{Remark}

\theoremstyle{definition}

\theoremstyle{definition}
\newtheorem{assumption}[theorem]{Assumption}

\makeatletter
\def\dashint{\operatorname%
{\,\,\text{\bf--}\kern-.98em\DOTSI\intop\ilimits@\!\!}}
\makeatother

\newcommand{\nlimsup}%
{\operatornamewithlimits{\overline{lim}}}
\newcommand{\nliminf}%
{\operatornamewithlimits{\underline{lim}}}

\def\bR{\mathbb{R}}
\def\bZ{\mathbb{Z}}

\def\cC{\mathcal{C}}

\def\cH{\mathcal{H}}
\def\cP{\mathcal{P}}

\def\cS{\mathcal{S}}

\newcommand\dist{{\rm dist}\,}

\begin{document}

\title[Existence for fully nonlinear parabolic equations] {On the
existence of smooth solutions for fully nonlinear parabolic equations
with measurable ``coefficients'' without convexity assumptions}

\author[H. Dong]{Hongjie Dong}
\address[H. Dong]{Division of Applied Mathematics, Brown University,
182 George Street, Providence, RI, 02912}
\email{Hongjie\_Dong@brown.edu}
\thanks{H. Dong was partially supported by NSF Grant DMS-1056737.}

\author[N. V. Krylov]{N.V. Krylov}
\address[N. V. Krylov]{127 Vincent Hall, University of Minnesota,
 Minneapolis, MN, 55455}
\email{krylov@math.umn.edu}
\thanks{N. V. Krylov was partially supported by
  NSF Grant DMS-1160569}

\keywords{Fully nonlinear parabolic equations, Bellman's equations,
finite differences}

 \subjclass[2010]{35K55,39A12}

\begin{abstract}
We show that for any uniformly parabolic fully
nonlinear second-order equation with bounded measurable
``coefficients'' and bounded ``free'' term
 in  any cylindrical smooth domain  with  smooth
boundary data
one can find an
approximating equation which has a unique continuous  solution
with the first derivatives bounded and the second spacial derivatives
locally bounded. The approximating equation is constructed in such a
way that it modifies the original one only for large values of the
unknown function and its spacial derivatives.
\end{abstract}

\maketitle

\mysection{Introduction and  main result}

                                             \label{section 2.5.1}

In this article, we consider parabolic  equations \begin{equation}
                                                \label{7.29.1}
\partial_t v(t,x)+H[v](t,x):= \partial_t v(t,x)+H(v(t, x),D v(t,x),D^{2}v(t, x),t, x)=0
\end{equation}
in subdomains of $\bR^{d+1} =\bR\times \bR^d $, where
$$
\bR^{d}=\{x=(x_{1},...,x_{d}):x_{1},...,x_{d}\in \bR\}.
$$
Here
$$
\partial_t=\partial/\partial t,\quad
 D^{2}u=(D_{ij}u),\quad Du=(D_{i}u),\quad
D_{i}=\frac{\partial}{\partial x_{i}},
\quad D_{ij}=D_{i}D_{j}.
$$
 We prove that for any uniformly parabolic fully nonlinear second-order equation with bounded measurable ``coefficients'' and bounded ``free'' term in a given cylindrical smooth domain  with  smooth
boundary data, one can find an approximating equation which has a
unique continuous solution with
the first derivatives bounded and the second spacial derivatives locally bounded.
The novelty of our result is that we do not impose any convexity
assumptions on the equation. This is a continuation of
\cite{Kr12.2}, in which a similar result was obtained for elliptic
equations.

The convexity of operators plays an important role in the regularity
theory of fully nonlinear elliptic and parabolic equations. For
elliptic equations without convexity assumptions, the best result
one can get is that viscosity solutions are in $C^{1+\alpha}$ (see
Trudinger \cite{Tr89}) under the condition that the operators are
sufficient regular (H\"older) with respect to the independent
variables. In fact, N. Nadirashvili and S. Vl\v{a}dut \cite{NV}
found an example which shows that even for elliptic operators
independent of the space variables viscosity solutions may not have
bounded second-order derivatives. For equations with measurable
coefficients, M. G. Crandall, M. Kocan, and A. \'Swi{\c e}ch
\cite{CKS00} developed a theory of $L_{p}$-viscosity solutions  (see
also the references therein).

Interior $W^{2}_{p}$ a priori estimates for elliptic equations was first derived by L.
Caffarelli under an assumption that certain estimates hold for equations
with zero ``free'' term,
  which are known to hold only for $H$ that are
either convex or concave with respect to $v$, $Dv$, and $D^{2}v$ (see
\cite{Caf89} and \cite{CC95}). Note that some particular cases of $C^{2+\alpha}$ a priori estimates without this assumption can be found in \cite{CC03} and \cite{Ko09}. This line of research
was continued by L. Wang in \cite{Wa92} who obtained similar interior a
priori estimates for parabolic equations, by M. G. Crandall, M. Kocan,
and A. \'Swi{\c e}ch \cite{CKS00} who established the {\em
solvability\/} in local Sobolev spaces of the boundary-value problems
for fully nonlinear parabolic equations, and by  N.~Winter \cite{Wi09}
who established the solvability in the global $W^2_p$-space of the
associated boundary-value problem in the elliptic case. In the existence
parts in \cite{CKS00} and \cite{Wi09} the function
 $H$ is supposed to be convex with respect to
$ D^{2}v$ and continuous in $x$ (concerning the latter assumption see
\cite[Remark 2.3]{Wi09}, \cite{Kr10}, and
 \cite[Example 8.3]{CKS00}). It is worth noting that in
the above references the authors considered  equations like \eqref{7.29.1} with the right-hand side which is not zero but rather a function from an $L_{p}$-space. In our setting we can only treat bounded right-hand
sides.

In two recent papers \cite{Kr10, DKL} the authors used a very
different approach to study the $W^2_p$ theory of fully nonlinear
elliptic and parabolic equations with VMO ``coefficients''. The
convexity of $H$ with respect to $D^{2}v$ is relaxed for the
a priori estimates, but is still assumed in the proof of the existence
result. Nevertheless, it is conjectured in \cite{DKL} that the
convexity condition can be dropped or at least relaxed for the
existence result.

This conjecture was addressed in \cite{Kr12.2} and \cite{Kr12.3}. In \cite{Kr12.2} the author considered uniformly elliptic fully nonlinear second-order equation of the form $H[v]=0$ with bounded measurable ``coefficients'' and
bounded ``free'' term  in a given smooth domain  with  smooth
boundary data. It is shown that one can find an approximating equation
$$
\max(H[v],P[v]-K)=0,
$$
which has a
unique continuous   solution with locally bounded second-order derivatives. The approximating equation differs from the original one only for large values of the unknown
function and its derivatives. By using this result, in \cite{Kr12.3} the author established the existence and uniqueness of solutions of fully nonlinear elliptic second-order equations in smooth domains, under a relaxed convexity assumption
with respect to $D^2 v$ and a VMO condition with respect to $x$ which are imposed only for large $|D^2 v|$.

Roughly speaking, the main idea of \cite{Kr12.2} is that on the set, say $\Gamma$, where the second-order
derivatives of $v$ are large we have $P[v]=K$ and  in the spirit of the maximum principle the second order derivative on $\Gamma$ are controlled by
their values on the boundary of $\Gamma$, where they are under control by
the definition of $\Gamma$. The implementation of this idea, however,
requires sufficient regularity of solutions to \eqref{9.23.2}. Since this
is not known a priori, the above idea is applied at the level of
finite differences.

In this article, we extend the result of \cite{Kr12.2} to
parabolic equations. To state our main results, we introduce
a few notation and assumptions.
 Let  $\cS$  be  the
set of symmetric $d\times d$ matrices, fix a constant
$\delta\in(0,1]$, and  set
$$
\cS_{\delta}=\{a\in\cS:\delta|\xi|^{2}\leq a_{ij}\xi_{i}\xi_{j}
\leq\delta^{-1}|\xi|^{2},\quad \forall\, \xi\in\bR^{d}\}, $$ where and
everywhere in the article the summation convention is enforced unless
specifically stated otherwise.

\begin{assumption}
                                    \label{assumption 9.23.1}
(i) The function $H(u,t,x)$, $u=(u',u'')$, $$
u'=(u'_{0},u'_{1},...,u'_{d}) \in\bR^{d+1},\quad u''\in\cS,\quad
(t,x)\in\bR^{d+1}, $$
 is measurable with respect to $(t,x)$ for any $u$,
and Lipschitz continuous in $u$ for every $(t,x)\in\bR^{d+1}$.

(ii) For any $(t,x)$, at all points of differentiability of $H(u,t,x)$ with
respect to $u$, we have
$$
(H_{u''_{ij}})\in \cS_{\delta},\quad
|H_{u'_{k}}|\leq \delta^{-1},\quad k=1,...,d, \quad 0\leq-H_{u'_{0}}\leq
\delta^{-1}.
$$

(iii) Finally,  $$ \bar{H}:= \sup_{(t,x) \in\bR^{d+1}}|H(0,t,x)|<\infty. $$
\end{assumption}
\begin{remark}
                                    \label{remark 12.21.1}
It is almost obvious that Assumption \ref{assumption 9.23.1} (ii) is
equivalent to the requirement that, for any $u\in\bR^{d+1}\times\cS$,
$x,\xi\in\bR^{d}$, $\eta\in\{\pm e_{1},...,\pm e_{d}\}$, where
$e_{1},...,e_{d}$
 is the set
of standard basis vectors in $\bR^{d}$, and $r\geq0$, we have
$$
\delta|\xi|^{2}\leq H(u',u''+\xi\xi^{*},t,x)- H(u',u'',t,x)\leq
\delta^{-1}|\xi|^{2}, $$ $$ |H(u'+r(0,\eta),u'',t ,x)-H(u',u'',t ,x)|\leq
\delta^{-1}r,
$$
$$
 H(u',u'',t ,x)-\delta^{-1}r
\leq H(u'+r(1,0),u'',t ,x)\leq H(u',u'',t ,x) ,
$$
where
$(0,\eta)=(0,\eta_{1},...,\eta_{d})$ and $(1,0)=(1,0,...,0)$.
\end{remark}

Let $\Omega$ be an open bounded subset of $\bR^{d}$ with
$C^{2}$ boundary and $-\infty
\le S<T<\infty$. We denote the parabolic boundary of the cylinder
$(S,T)\times \Omega$ by
$$
\partial' ((S,T)\times \Omega)=(\{T\}\times \Omega)\cup ((S,T]\times \partial \Omega).
$$
For any $T>0 $, we define
$\Omega_T=(0,T)\times \Omega$.

We use the H\"older spaces $\cC^{\alpha,\beta},\alpha,\beta\in
(0,1]$, of functions of  $(t,x)$ which are the
spaces of bounded functions having finite H\"older
  norm  of
order $\alpha$ in
$t$ and $\beta$ in $x$.
 The symbol $C^{1,2}$ stands for the space
of bounded
 functions $u$ for which $\partial_{t}u, Du$, and $D^{2}u$
are bounded and continuous with respect to $(t,x)$.
These spaces are provided with natural norms.
We denote by $W^{1,2}_p(\Omega_T)$  the space of
functions $v$ defined on
$\Omega_T$ such that $v$, $Dv$, $D^2v $, and $\partial_t v$
are in
$L_p(\Omega_T)$.

\begin{theorem}
                                    \label{theorem 9.23.1}
Let $T>0$ and $K\geq0$ be fixed constants, and
$g\in  C ^{1,2} (\bar\Omega_T)$.
There is a  constant  $\hat{\delta}\in(0,\delta]$ depending only on
$\delta$ and $d$ and there exists a function $P(u) $ (independent of
$t,x$), satisfying Assumption \ref{assumption 9.23.1} with $\hat{\delta}$
in  place of $\delta$, such that the equation
\begin{equation}
                                               \label{9.23.2}
\partial_t v+\max(H[v],P[v]-K)=0
\end{equation}
in $\Omega_T$ (a.e.) with terminal-boundary
condition $v=g$ on $\partial'\Omega$ has a unique solution $v\in
\cC^{1,1}(\bar{\Omega}_T)\cap W^{1,2}_{\infty,\text{loc}}(\Omega_{T})$.
In addition, for all
$i,j$, and $p\in(d+1,\infty)$,
\begin{equation}
                                                \label{1.13.1}
|v|,|D_{i}v|,\rho|D_{ij} v |,|\partial_t v |\leq N(\bar{H}+K
+\|g\|_{ C ^{1,2}(\Omega_T)})\quad\text{in} \quad \Omega_T \quad (a.e.),
\end{equation}
\begin{equation}
                                                \label{1.13.2}
\|v\|_{W^{1,2}_{p}(\Omega_T)}\leq
N_{p}(\bar{H}+K+\|g\|_{W^{1,2}_{p}(\Omega_T)}), \end{equation}
\begin{equation}
                                                \label{2.28.1}
\|v\|_{\cC^{\alpha/2,\alpha}(\Omega_T)}\leq N (\|H[0]\|_{L_{d+1}(\Omega_T)}+\|g
\|_{\cC^{\alpha/2,\alpha}(\Omega_T)}),
\end{equation}
where
$$
\rho=\rho(x)=\dist(x,\bR^{d}\setminus\Omega),
$$
$\alpha\in(0,1)$ is a constant depending only on $d$ and $\delta$,
   $N$ is a constant depending only on $\Omega$ and $\delta$, whereas $N_{p}$ only depends on $\Omega$,
$T$, $\delta$, and
$p$.

Finally, $P(u )$ is constructed on the sole basis of $\delta$ and $d$,
it is  positive homogeneous of degree one and convex in $u$.
\end{theorem}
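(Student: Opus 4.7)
The plan is to adapt the strategy of \cite{Kr12.2} from the elliptic to the parabolic setting, implementing the ``bad-set control'' argument at the level of parabolic finite differences.

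First, I construct $P(u)$ as a parabolic Pucci-type maximal operator, for instance
$$P(u)=\max_{a\in\cS_{\hat\delta}} a_{ij}u''_{ij}+\hat\delta^{-1}\sum_{k=1}^{d}|u'_{k}|-\hat\delta^{-1}u'_{0},$$
for some $\hat\delta\in(0,\delta]$ to be fixed later. This is positively homogeneous of degree one and convex in $u$, and satisfies Assumption \ref{assumption 9.23.1} with $\hat\delta$ in place of $\delta$. Because $P$ is maximal among operators obeying these structural constants, $P[v]-K$ dominates $H[v]$ as soon as suitable combinations of $|Dv|$ and $|D^2v|$ exceed a threshold determined by $K$ and $\bar H$; consequently the $\max$ in \eqref{9.23.2} is realized by $P[v]-K$ on a ``saturation set'' $\Gamma$ where, as we shall see, the convexity and homogeneity of $P$ provide extra second-order control.

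Next, I introduce a finite-difference approximation on a space-time mesh of step $h$: replace $D_{i}$, $D_{ij}$ and $\partial_t$ in both $H[v]$ and $P[v]$ by symmetric spatial difference quotients and an implicit backward time-difference, producing a discrete equation for $v_h$ on a cylindrical lattice with terminal-boundary values prescribed by $g$. Solvability on each time slab follows by a monotone iteration using uniform discrete ellipticity and the Lipschitz structure supplied by Remark \ref{remark 12.21.1}. The $L_\infty$ and global Lipschitz bounds on $v_h$ come from a parabolic discrete maximum principle applied to $v_h$ and to its first differences, against barriers built from $g$ and the $C^{2}$-regularity of $\partial\Omega$. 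The crucial step is the interior second-difference estimate: for $\eta\in\{\pm e_1,\ldots,\pm e_d\}$ consider the weighted pure second difference $\rho_h\,\Delta_\eta^{2} v_h$, with $\rho_h$ a mesh version of $\rho$. Either its maximum over the grid is attained on the parabolic boundary (where it is controlled by $\|g\|_{C^{1,2}}$), or it is attained at an interior point, at which the modified equation reduces to $\partial_t v_h+P[v_h]=K$; differencing this convex, one-homogeneous relation and applying the discrete parabolic maximum principle to $\Delta_\eta^{2} v_h$ yields the asserted bound uniform in $h$, exactly as in the parabolic analogue of \cite{Kr12.2}.

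Passing to the limit $h\to 0$ along a subsequence, the uniform Lipschitz and local second-difference bounds give compactness in $\cC^{1,1}(\bar\Omega_T)\cap W^{1,2}_{\infty,\text{loc}}(\Omega_T)$, and the limit $v$ is easily identified as a solution of \eqref{9.23.2} with the $C^{1,2}$-bound \eqref{1.13.1}. Estimate \eqref{2.28.1} then follows from the parabolic Krylov--Safonov H\"older estimate for the linearization of $\max(H,P-K)$, and \eqref{1.13.2} from the $W^{1,2}_p$-theory for linear uniformly parabolic operators with bounded measurable coefficients, applied to the frozen-coefficient linearization. Uniqueness is by comparison, since $\max(H[\cdot],P[\cdot]-K)$ is still uniformly parabolic and monotone in $(v,Dv,D^{2}v)$. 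The main obstacle, as in the elliptic case, is precisely the interior second-difference bound: one must verify that on the discrete saturation set the convexity and homogeneity of $P$, together with the discrete parabolic maximum principle, really deliver a bound independent of $h$, despite the nonsmoothness of $\max(H,P-K)$ and the absence of a priori regularity. The parabolic setting adds the extra difficulty of handling the lateral boundary $\partial\Omega$ and the terminal surface $\{t=T\}$ simultaneously, and of choosing a time-discretization that preserves the needed monotonicity slab-by-slab.
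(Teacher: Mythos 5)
Your high-level plan (finite differences, bad-set control via the saturation $P[v]-K=\text{const}$, passage to the limit) is directionally right, but the details diverge from what actually works in several essential places, and one step is outright wrong.

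\textbf{The construction of $P$.} You propose a full Pucci maximal operator $\max_{a\in\cS_{\hat\delta}}a_{ij}u''_{ij}+\cdots$. The paper's $P$ is not this; it is built from a fixed finite set $\{l_1,\ldots,l_m\}\subset\bZ^d$ (from Theorem 3.1 of \cite{Kr11}) so that $P(u)$ depends on $u''$ only through the finitely many scalars $\langle u'' l_k,l_k\rangle$. This is not a cosmetic choice: it is what allows every $a\in\cS_{\delta/4}$ to be written as $\sum_k\lambda_k(a)l_kl_k^*$, and hence it is what makes it possible to rewrite $\max(H[v],P[v]-K)$ as an operator acting only on \emph{pure} second derivatives $v_{(l_k)(l_k)}$ (Section \ref{section 10.18.2}). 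With your $P$ you would be forced to discretize mixed derivatives $D_{ij}$, for which there is no monotone finite-difference scheme in general, so your ``discrete maximum principle'' steps would not be available.

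\textbf{Time discretization.} The paper deliberately does \emph{not} discretize in $t$; the scheme is a system of ODEs in time (cf.\ the discussion after Theorem \ref{theorem 9.23.1} and Lemma \ref{lemma 7.21.1}). Your backward-Euler scheme is not wrong in principle, but it introduces precisely the monotonicity-on-slabs difficulty you flag at the end, which is avoidable.

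\textbf{The interior second-difference bound.} You propose to difference the equation on the saturation set and apply the maximum principle directly to $\Delta_\eta^2 v_h$. This is not what is done, and by itself it would not close. The paper's estimate (Theorem \ref{theorem 9.18.1}, proved via Lemma \ref{lemma 9.20.1}) is a finite-difference \emph{Bernstein} argument with a cutoff, bounding the weighted quantity $\zeta^2[(\Delta_r u)^-]^2$, and it only delivers the \emph{lower} bound on pure second differences. The \emph{upper} bound is then obtained from the equation itself, but this requires a separate bound on $\partial_t v_h$ (estimate \eqref{eq10.12}), obtained by another application of the discrete maximum principle in a non-cylindrical domain (Lemma \ref{lemma 7.21.1}). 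This time-derivative control is the genuinely parabolic ingredient, explicitly absent in the elliptic case, and your sketch does not account for it.

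\textbf{The $W^{1,2}_p$ estimate.} Your claim that \eqref{1.13.2} follows ``from the $W^{1,2}_p$-theory for linear uniformly parabolic operators with bounded measurable coefficients, applied to the frozen-coefficient linearization'' is false: there is no $W^{1,2}_p$ solvability theory for linear parabolic equations with merely measurable leading coefficients. The paper gets \eqref{1.13.2} by rewriting the equation as $\partial_t v+P[v]=-G(v,Dv,D^2v,t,x)$ with $G$ \emph{bounded} (via \eqref{3.6.4}), and then invoking the $W^{1,2}_p$-solvability of fully nonlinear parabolic Bellman equations (Theorem 1.2 of \cite{DKL}), which relies on the \emph{convexity} of $P$ in $u''$. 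The convexity of $P$ is doing real work here, not just ensuring uniform parabolicity.

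\textbf{Reductions omitted.} You also omit the preliminary reductions (Sections \ref{section 12.13.1}--\ref{section 12.13.2}): the fixed-point trick to arrange $-H_{u'_0}\geq\delta$, the mollification to obtain Lipschitz dependence on $(t,x)$, and the reduction to boundedly inhomogeneous $H$. The last is what justifies invoking the pure-second-derivative representation from \cite{Kr11}. Without these, the model problem of Section \ref{section 10.18.2} is not reachable from the general hypotheses.
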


 In the proof of Theorem \ref{theorem 9.23.1},
we adapt the aforementioned idea in \cite{Kr12.2} to the
parabolic setting. As there, we start at the level of
finite  differences. Although it is tempting to discretize
the equation with respect to both $t$ and $x$, it turns out
that it suffices for us to discretize only with respect to $x$,
so that the discretized equation is a system of ordinary
differential equations with respect to $t$. The estimates of
the solution to the discretized equation as well as its
first-order space finite differences follow the line in
\cite{Kr12.2} by using a version of the maximum principle in
``non-cylindrical'' domains; cf. Lemma \ref{lemma 7.21.1}.
The estimates of the second-order space finite differences
are more involved. In order to get their lower bound, we
apply Bernstein's method to the discretized equation. In
contrast to the elliptic case, for the upper bound we first
need to control the time derivative of the solution, using
again Lemma \ref{lemma 7.21.1}. The upper bound of the
second-order space finite differences is then deduced from
the above estimates and the equation itself.

\begin{remark}
                                    \label{remark 2.28.1}
Estimate \eqref{2.28.1} follows from other assertions of Theorem
\ref{theorem 9.23.1} and the classical results about linear equations
with measurable coefficients (see, for instance, Section VII.9 of
\cite{GML}). Indeed, as is easy to see for $v\in
 W^{1,2}_{p}(\Omega_T) $
satisfying \eqref{9.23.2} we have that $$
-\max(H[0],P[0]-K)=\max(H[v],P[v]-K)-\max(H[0],P[0]-K) $$ $$
=a_{ij}D_{ij}v+b_{i}D_{i}v-cv $$ with some functions
$a=(a_{ij})\in\cS_{\hat{\delta}}$, $|b_{i}|\leq\hat{\delta}^{-1}$,
$0\leq c\leq\hat{\delta}^{-1}$  (cf. the proof of Lemma \ref{lemma
10.4.2}). Furthermore,
$$
|\max(H[0],P[0]-K)| =|\max(H[0],-K)| \leq|H[0]|.
$$

The assertion of Theorem \ref{theorem 9.23.1}
concerning uniqueness in our class of functions is also a classical
result derived from the parabolic Alexandrov estimate. \end{remark}

\begin{remark}
                                 \label{remark 8.14.1}

Even though quite a few auxiliary  results from  \cite{Kr12.2}
are used in the present article, the main result of
\cite{Kr12.2} is not. It even turns out that it  can be
derived from Theorem \ref{theorem 9.23.1}  and
the results of \cite{DKL}. Of course, such an
indirect derivation is somewhat longer than the one given
in \cite{Kr12.2} but yet it is worth mentioning.

Thus, we assume that $H$ and $g$ are independent of $t$.
The proof  of the elliptic counterparts of \eqref{1.13.2}
and \eqref{2.28.1} consists of just a repetition of the arguments
of the present article (using \cite{DKL}).
 In what concerns
existence and estimate \eqref{1.13.1}, we
 denote by $v_{T}$ the solution from Theorem
\ref{theorem 9.23.1}. By \eqref{1.13.1}, for any
$S\geq0$ the family
$v_{T}$, $T\geq S$, is equi-bounded and equi-continuous
on $\Omega_{S}$. It follows that there is a sequence
$T(n)\to\infty$ as $n\to\infty$ such that $
v_{T(n)}$ converge uniformly on each $\Omega_{S}$
to a function $v$ obviously satisfying
\eqref{1.13.1} on $\Omega_{\infty}$.
The rules of passing to the limit in
fully nonlinear equations (see, for instance, Theorem 3.5.9 of
\cite{Kr85})
show  that $v$ satisfies \eqref{9.23.2}
in $\Omega_{\infty}$. Since the functions $g$, $H$, and $P$
are independent of $t$, $v(t+T,x)$ satisfies
the same equation for any fixed
$T\geq0$ and by uniqueness $v(t,x)=v(t+T,x)$.
This means that $v(t,x)=v(x)$, equation
\eqref{9.23.2} becomes elliptic, and we obtain
all assertions of Theorem 1.1 of \cite{Kr12.2}.
\end{remark}

To conclude our comments about Theorem \ref{theorem 9.23.1}
 we show how $P$
is constructed. By Theorems 3.1 of \cite{Kr11} there exists a set $$ \{
l_{1},...,l_{m}\} \subset \bZ^{d }, $$ $m=m(\delta,d)\geq d$, chosen on
the sole
 basis of
knowing $\delta$ and $d$ and
 there exists
a constant  $$ \hat{\delta}=\hat{\delta}(\delta,d ) \in(0,\delta/4] $$
such that:

(i) We have $$ e_{i},e_{i}\pm e_{j}\in \{l_{1},...,l_{m}\}
=\{-l_{1},...,-l_{m}\} $$ for all $i,j=1,...,d$  (recall that
$e_{1},...,e_{d}$ is the
 standard orthonormal basis of $\bR^{d}$);

(ii) There   exist real-analytic functions $\lambda_{1}(a),...,
\lambda_{m}(a)$ on $\cS_{\delta/4}$ such that for any
$a\in\cS_{\delta/4}$
\begin{equation*}
a\equiv\sum_{k=1}^{m}\lambda_{k}(a)l_{k}l_{k}^{*}, \quad
\hat{\delta}^{-1}\geq\lambda_{k}(a)\geq\hat{\delta} ,\quad \forall\, k.
\end{equation*}

Now introduce
$$ \cP(z)  =\max_{\substack{\hat{\delta}/2\leq a_{k}\leq
2\hat{\delta}^{-1} \\k=1,...,m} } \max_{\substack{ |b_{k}|\leq
2\hat{\delta}^{-1}\\k=1,...,d} } \max_{\hat{\delta}/2 \leq c\leq
2\hat{\delta}^{-1}}\big[\sum_{k=1}^{m} a_{k}
z''_{k}+\sum_{k=1}^{d}b_{k}z'_{k} -cz'_{0}\big], $$ and for
$u=(u',u'')\in\bR^{d+1}\times\cS$ define $$ P(u',u'')=\cP(u',\langle
u''l_{1},l_{1}\rangle,..., \langle u''l_{m},l_{m}\rangle), $$ where
$\langle\cdot,\cdot\rangle$ is the scalar product in $\bR^{d}$.

 The remaining part  the article is organized as follows.
Sections \ref{section 12.13.1} and \ref{section 12.13.2} are
devoted to the reduction of  proving
Theorem \ref{theorem 9.23.1} to proving
Theorem \ref{theorem 10.5.1}, that is a special case of
Theorem \ref{theorem 9.23.1} but under additional
assumptions.
  In Section \ref{section 9.22.1} we consider
finite-difference approximations for equations   with
``constant'' coefficients and prove interior estimates for
the second-order differences of solutions. In Section
\ref{section 10.18.2} by using the results of the previous
section we prove an analog of Theorem \ref{theorem 9.23.1}
for $H$, that, as far as the dependence
on $D^{2}v$ is concerned, include only {\em pure\/} second-order derivatives. We
complete the proof of Theorem \ref{theorem 10.5.1} in
Section \ref{section 12.13.4}.

\mysection{Reducing Theorem \protect\ref{theorem 9.23.1} to a particular
case where $-H_{u'_{0}}\geq \delta$}
                                   \label{section 12.13.1}

Suppose that Theorem \ref{theorem 9.23.1} is true under the additional
assumption that \begin{equation}
                                                   \label{3.5.2}
-H_{u'_{0}}\geq \delta \end{equation}
 at all points of
differentiability of $H(u,t,x)$ with respect to $u$.
 Then we are going to prove it
in the original form. Take an $H$ satisfying only Assumption
\ref{assumption 9.23.1}, take $n>0$, and  consider the mapping
$T_{n}:w\to v$ defined for any $w\in C(\bar{\Omega}_T)$ and mapping it
into a unique solution of \begin{equation}
                                          \label{10.4.1}
\partial_t u+\max(H[v]- v+n\chi(w/n),P[v]-K)=0 \end{equation} in $\Omega$  (a.e.)
with terminal-boundary condition $v=g$ on $\partial'\Omega_T$, where
$$ \chi(t)=(-1)\vee t\wedge 1. $$
By assumption $v$ is well defined and $v=T_{n}w\in
\cC^{1,1}(\bar{\Omega}_T)
\cap W^{1,2}_{\infty,\text{loc}}(\Omega_{T})$  and satisfies
$$
|v|,|D_{i}v|,\rho|D_{ij} v |,|\partial_{t} v |\leq N(\bar{H}+n+K
+\|g\|_{C^{1,2}(\Omega_T)}),
$$
(a.e.) in $\Omega_T$, and
$$
\|v\|_{W^{1,2}_{p}(\Omega_T)}\leq
N_{p}(\bar{H}+n+K+\|g\|_{W^{1,2}_{p}(\Omega_T)})
$$
if $p>d+1$. It follows that, for
each $n$, $T_{n}$ maps $C(\bar{\Omega}_T)$ into its compact subset.

\begin{lemma}
                                        \label{lemma 10.4.1}
For each $n$, the mapping $T_{n}$ is continuous in $C(\bar{\Omega}_T)$.
\end{lemma}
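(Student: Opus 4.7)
The plan is to establish that $T_n$ is in fact Lipschitz continuous on $C(\bar{\Omega}_T)$, which will yield continuity. Take $w_j\to w$ in $C(\bar{\Omega}_T)$, set $v_j:=T_n w_j$, $v:=T_n w$, $f_j:=n\chi(w_j/n)$, $f:=n\chi(w/n)$. Since $\chi$ is $1$-Lipschitz on $\bR$, we have $\|f_j-f\|_{C(\bar{\Omega}_T)}\le\|w_j-w\|_{C(\bar{\Omega}_T)}\to 0$. By the standing hypothesis, $v_j$ and $v$ both lie in $\cC^{1,1}(\bar{\Omega}_T)\cap W^{1,2}_{\infty,\mathrm{loc}}(\Omega_T)$, coincide with $g$ on $\partial'\Omega_T$, and satisfy \eqref{10.4.1} a.e.\ in $\Omega_T$ with sources $f_j$ and $f$ respectively.

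The main step is to subtract the two copies of \eqref{10.4.1} and linearize in the spirit of Remark \ref{remark 2.28.1}. Put
\begin{equation*}
G(u,f):=\max\bigl(H(u)-u'_0+f,\ P(u)-K\bigr).
\end{equation*}
This $G$ is Lipschitz in $(u,f)$ and, at every point of differentiability, satisfies $(G_{u''_{ij}})\in\cS_{\hat\delta}$, $|G_{u'_i}|\le\hat\delta^{-1}$, $0\le G_f\le 1$, and $-G_{u'_0}\ge\gamma$ with $\gamma:=\hat\delta/2>0$, since both arms of the maximum obey these bounds (the $H$-arm by Assumption \ref{assumption 9.23.1} together with the explicit addition of $-u'_0$, and the $P$-arm by the explicit construction of $P$). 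Integrating $\nabla G$ along the segment joining $(v,Dv,D^2v,f)$ to $(v_j,Dv_j,D^2v_j,f_j)$ produces measurable coefficients $a_{ij},b_i,c,\mu$ on $\Omega_T$ with $a\in\cS_{\hat\delta}$, $|b_i|\le\hat\delta^{-1}$, $c\ge\gamma$, $0\le\mu\le 1$, such that $W:=v_j-v$ satisfies
\begin{equation*}
\partial_t W+a_{ij}D_{ij}W+b_iD_iW-cW=-\mu(f_j-f)\quad\text{a.e.\ in }\Omega_T,
\end{equation*}
with $W=0$ on $\partial'\Omega_T$.

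To conclude one applies the parabolic maximum principle for strong solutions to this linear equation. Because $c\ge\gamma$, the constants $\pm M$ with $M:=\gamma^{-1}\|f_j-f\|_{C(\bar{\Omega}_T)}$ are, respectively, a super- and subsolution whose boundary data dominate $\pm W$; the parabolic Aleksandrov--Krylov--Pucci estimate (made rigorous by the membership $v_j,v\in W^{1,2}_p(\Omega_T)$ for $p>d+1$ granted by \eqref{1.13.2}) then yields $\|W\|_{C(\bar{\Omega}_T)}\le\gamma^{-1}\|w_j-w\|_{C(\bar{\Omega}_T)}$, proving continuity. The delicate point is justifying the linearization with coefficients in the prescribed cone: on the full-measure set where one arm of the maximum strictly dominates one simply linearizes that smooth arm, while on the equality set either choice yields admissible coefficients, so the segment integration produces well-defined measurable $a,b,c,\mu$ with the stated bounds.
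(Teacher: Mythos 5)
Your proof is correct in its main thrust but takes a genuinely different route from the paper, and it contains a slightly muddled justification at the end that is worth flagging.

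The paper's proof of Lemma \ref{lemma 10.4.1} is \emph{soft}: it exploits the compactness of $T_n C(\bar\Omega_T)$ (already established from the a priori estimates) to extract a uniformly convergent subsequence $v_m\to v$, then shows $v=T_n w$ by passing to the limit in the two one-sided differential inequalities obtained by replacing $\chi(w_m/n)$ with $\sup_{k\geq r}\chi(w_k/n)$ and $\inf_{k\geq r}\chi(w_k/n)$, invoking the stability theorem (Theorem 3.5.9 of \cite{Kr85}). Uniqueness for \eqref{10.4.1} then upgrades subsequential convergence to convergence of the full sequence. Your proof is \emph{quantitative}: you subtract the two copies of \eqref{10.4.1}, linearize by Hadamard's formula, and run a comparison argument with constant barriers $\pm\gamma^{-1}\|f_j-f\|_\infty$, yielding the Lipschitz bound $\|T_n w_j-T_n w\|_\infty\le\gamma^{-1}\|w_j-w\|_\infty$. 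This is strictly stronger and arguably cleaner; the paper's softer route has the virtue of sidestepping the linearization technicalities entirely. Your verification of the coefficient bounds (in particular $-G_{u'_0}\ge\hat\delta/2$ and $0\le G_f\le1$ arm by arm) is correct, as is the sign bookkeeping in the comparison step for the terminal-value problem.

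The one place your write-up slips is the closing ``delicate point.'' You suggest that away from the equality set of the maximum one ``simply linearizes that smooth arm.'' But neither arm is smooth: $H$ is only Lipschitz in $u$ under Assumption~\ref{assumption 9.23.1}, and $P$, being a maximum of linear functions, has a non-smooth set of its own. The actual justification is the one the paper itself uses in the proof of Lemma~\ref{lemma 10.4.2} and in the discussion surrounding \eqref{7.25.1}: for a Lipschitz $G$, Hadamard's (mean value) formula
$$G(p^1)-G(p^2)=\int_0^1\nabla G\bigl(p^2+\theta(p^1-p^2)\bigr)\,d\theta\cdot(p^1-p^2)$$
holds for almost every endpoint pair, since the non-differentiability set of $G$ is Lebesgue-null (Rademacher); applying this pointwise a.e.\ in $(t,x)$ with $p^i=(v^i,Dv^i,D^2v^i,f^i)$ produces measurable coefficients lying in the required convex set. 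The bounds on $a$, $b$, $c$, $\mu$ then come from the essential bounds on $\nabla G$, not from smoothness of either arm. With that correction, your argument is complete and self-contained.
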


\begin{proof}
Let $w,w_{m}\in C(\bar{\Omega}_T)$, $m=1,2,...$, and assume that
$\|w-w_{m}\|_{0,\Omega_T}\to0$ as $m\to\infty$, where
$\|\cdot\|_{0,\Omega_T}$ is the sup norm in $C(\bar{\Omega}_T)$. In light of
uniqueness of solutions of \eqref{10.4.1} with terminal-boundary condition
$v=g$, to prove the lemma, it suffices to show that, at least along a
subsequence, $\| T_n w -v_{m}\|_{0,\Omega_T}\to 0$,
where
$v_{m}=T_{n}w_{m}$. Since $T_{n} C(\bar{\Omega}_T)$ is a compact set,
there is a subsequence and a $v \in C(\bar{\Omega}_T)$ such that
$\|v-v_{m}\|_{0,\Omega_T}\to 0$ and $v=g$ on $\partial'\Omega_T$. Without
losing generality we may assume that the above convergence holds along
the original sequence. Now we need only  show  that $v=T_{n}w$.

Observe that for $m\geq r$ we have
$$
\partial_t v_m+\max(H[v_{m}]- v_{m}+n\sup_{k\geq
r} \chi(w_{k}/n),P[v_{m}]-K) \geq0
$$ in $\Omega_T$ (a.e.). Since the
norms $\|v_{m}\|_{W^{1,2}_{d+1}(\Omega
_{T})}$ are bounded, by Theorem 3.5.9 of \cite{Kr85}, whose conditions are easily checked on the basis
of Remark \ref{remark 12.21.1},  we have  (a.e.)
$$
\partial_t v+\max(H[v ]-
v+n\sup_{k\geq r}\chi(w_{k}/n),P[v ]-K) \geq0.
$$
By letting
$r\to\infty$ we get  (a.e.)
$$
\partial_t v+ \max(H[v ]- v+n \chi(w /n),P[v ]-K)
\geq0.
$$
One obtains the opposite inequality  starting with
$$
\partial_t v_m+\max(H[v_{m}]- v_{m}+n\inf_{k\geq r}\chi(w_{k}/n),P[v_{m}]-K) \leq0.
$$
It follows that $v=T_{n}w$ indeed and the lemma is proved.
\end{proof}

Now by Tikhonov's theorem we conclude that, for each $n$, there exists
$v^{n}\in C(\bar{\Omega}_T)$ such that $v^{n}= T_{n}v^{n}$. By assumption
$v^{n} \in \cC^{1,1}(\bar{\Omega}_T)\cap
W^{1,2}_{\infty,\text{loc}}(\Omega_T)$  and
\begin{equation}
                                           \label{10.4.03}
|v^{n}|,|D_{i}v^{n}|,\rho|D_{ij} v^{n} |,|\partial_t v^{n} |\leq N(\bar{H}+ \|v^{n}\|_{0,\Omega_T}
+K+\|g\|_{C^{1,2}(\Omega_T)})
\end{equation}
(a.e.) in $\Omega_{T}$ and
\begin{equation}
                                           \label{10.4.3}
\|v^{n}\|_{W^{1,2}_{p}(\Omega_T)}\leq N_{p}(\bar{H}+\|v^{n}\|_{0,\Omega_T}
+K+\|g\|_{W^{1,2}_{p}(\Omega_T)}),
\end{equation}
 where $N$ only depends on $\Omega$ and $\delta$,
 and $N_{p}$ only depends on $\Omega$, $T$, $\delta$, and $p$.

\begin{lemma}
                                        \label{lemma 10.4.2}
There is a constant $N$
depending only on the diameter of $\Omega$ and
$\delta$ such that $$ \|v^{n}\|_{C(\Omega_{T})}\leq
N(\bar{H}+K+\|g\|_{C(\Omega_{T})}).
$$

\end{lemma}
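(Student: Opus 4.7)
The plan is to sandwich $v^n$ between two explicit barriers that are independent of $n$ and then invoke a standard comparison argument. After a translation we may assume $\Omega\subset\{0<x_1<R\}$ where $R=\mathrm{diam}\,\Omega$. Fix $\alpha=2\hat\delta^{-2}$ and put $\phi(x)=e^{\alpha R}-e^{\alpha x_1}\ge 0$ on $\bar\Omega$. The upper barrier will be $\psi(x)=\|g\|_{C(\Omega_T)}+M_1\phi(x)$ with $M_1=\tfrac12\bar{H}\delta\hat\delta^{2}$, and the lower barrier the constant $-M_0=-\|g\|_{C(\Omega_T)}-\tfrac12 K\hat\delta$.

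The core step is to verify that $\psi$ is a classical supersolution of \eqref{10.4.1} (with $w=v^n$). Since $\partial_t\psi=0$ and, on $\{\psi\ge 0\}$, $-\psi+n\chi(\psi/n)\le 0$, it is enough to prove $H[\psi]\le 0$ and $P[\psi]\le K$ pointwise. Because $M_1 D^2\phi=-M_1\alpha^{2}e^{\alpha x_1}e_1 e_1^{*}\le 0$, applying the three inequalities of Remark \ref{remark 12.21.1} to $H$ and, separately, to $P$ (which satisfies the analogue with $\hat\delta$ in place of $\delta$) yields
\begin{equation*}
H[\psi]\le \bar{H}+M_1\alpha e^{\alpha x_1}(\delta^{-1}-\delta\alpha),\qquad P[\psi]\le -\tfrac{\hat\delta}{2}\psi+M_1\alpha e^{\alpha x_1}(\hat\delta^{-1}-\hat\delta\alpha).
\end{equation*}
For $\alpha=2\hat\delta^{-2}$, and using $\hat\delta\le\delta$, both bracketed factors are $\le -\hat\delta^{-1}$, hence $P[\psi]\le -\hat\delta\psi/2\le 0\le K$ and, for the chosen $M_1$, $H[\psi]\le \bar{H}-M_1\alpha/\delta\le 0$. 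For the lower barrier, direct evaluation of $\cP$ at the point with vanishing spatial derivatives and $u'_0=-M_0<0$ gives $P[-M_0]=2\hat\delta^{-1}M_0\ge K$, so already $\max(\cdot,P[-M_0]-K)\ge 0$ and $-M_0$ is a classical subsolution.

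On $\partial'\Omega_T$ we have $-M_0\le g\le \psi$; to deduce $-M_0\le v^n\le \psi$ in $\bar\Omega_T$ I set $w=v^n-\psi$ and linearize the maximum in the standard Lagrangian way, writing $F[v^n]-F[\psi]=\tilde a_{ij}D_{ij}w+\tilde b_i D_i w+\tilde c w$, where $\tilde a\in\cS_{\hat\delta}$, $|\tilde b_i|\le\hat\delta^{-1}$, and $\tilde c\le 0$ (the last inequality inherited from $0\le -H_{u'_0}$, $0\le -P_{u'_0}$, $\chi'\in[0,1]$, together with the extra $-v$ term inside the first slot of the max). Then $w$ is a backward-parabolic supersolution of a linear equation with bounded coefficients and $\tilde c\le 0$, with $w\le 0$ on $\partial'\Omega_T$, so the classical parabolic maximum principle (equivalently, parabolic ABP with $c\le 0$ and zero right-hand side) yields $w\le 0$. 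The same argument applied to $v^n-(-M_0)$ produces the lower bound, and combining gives
\begin{equation*}
\|v^n\|_{C(\Omega_T)}\le \|g\|_{C(\Omega_T)}+M_1 e^{\alpha R}+\tfrac12 K\hat\delta\le N\bigl(\bar{H}+K+\|g\|_{C(\Omega_T)}\bigr),
\end{equation*}
with $N$ depending only on $\mathrm{diam}\,\Omega$ and $\delta$ (via $\hat\delta=\hat\delta(\delta,d)$).

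The main technical point is constructing a \emph{single} barrier that simultaneously controls both the $H$- and the $P$-piece of the maximum; because $P$ has the smaller ellipticity constant $\hat\delta\le\delta/4$, the exponent $\alpha$ must be tuned to $\hat\delta^{-2}$, and the resulting factor $e^{\alpha R}$ is what gets absorbed into $N$. The $n$-dependent truncation $n\chi(v/n)$ might have seemed to prevent an $n$-independent estimate, but on the relevant ranges $\{\psi\ge 0\}$ and $\{-M_0\le 0\}$ it contributes with the favourable sign in the super- and subsolution inequalities and therefore drops out of both computations, which is precisely why the final bound can be made uniform in $n$.
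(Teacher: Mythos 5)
Your argument is correct and, at its core, is the same as the paper's: both rely on Hadamard-type linearization together with an exponential barrier and an ABP-type maximum principle, and both use the key observation that the truncation $-u'_0+n\chi(u'_0/n)$ contributes with a nonpositive $u'_0$-derivative so that the zero-order coefficient keeps the favorable sign and the bound becomes uniform in $n$. The only real difference is presentational: the paper linearizes $H^n_K$ around $0$ once, obtaining a linear equation $\partial_t v^n+a_{ij}D_{ij}v^n+b_iD_iv^n-cv^n=f$ with $c\ge 0$ and $|f|\le\bar H+K$, and then invokes the parabolic Alexandrov estimate plus a ready-made global barrier from Lemma~8.8 of [Kr11]; you instead construct the barrier $\psi$ explicitly, verify directly that it is a classical supersolution of the \emph{nonlinear} equation (which forces you to tune $\alpha$ to the smaller ellipticity $\hat\delta$ coming from $P$), and then linearize the difference $v^n-\psi$. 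Your way is a bit more work but gives an explicit constant; one small caveat is that since $v^n\in W^{1,2}_{\infty,\text{loc}}$ rather than $C^{1,2}$, the comparison step genuinely needs the Alexandrov (ABP) form of the maximum principle rather than the classical one — you note this parenthetically, but the word ``equivalently'' is misleading there.
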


\begin{proof}
Introduce $$ H^{n}_{K}(u,
t,x)=\max(H(u,t,
x)-u'_{0}+n\chi(u'_{0}/n),
P(u)-K) $$ and observe that
  $ H^{n}_{Ku'_{0}}\leq 0$ and
by  Hadamard's formula
$$
H^{n}_{K} (u' ,u'',t,
x)-H^{n}_{K} (0,t,x) =
u''_{ij} \int_{0}^{1}H^{n} _{Ku''_{ij}} (
\theta u' ,
\theta u'',t,x)\,d\theta
$$
\begin{equation}
                                                           \label{7.25.1}
+\sum_{i\geq1}
 u' _{i} \int_{0}^{1}H^{n} _{Ku' _{i}}
(\theta u' ,
\theta u'',t,x)\,d\theta+
 u' _{0} \int_{0}^{1}H^{n} _{Ku' _{0}}
(\theta u',\theta u'',t,x)\,d\theta.
\end{equation}
 provided that $H^{n} (u,t,x)$
is differentiable with respect to $u$
at $(\theta u,x)$ for almost all $\theta\in[0,1]$. Since
this happens to be the case for almost all
$u$, we see that, for each $n$, there exist $\cS_{\delta}$-valued
function $a $ and real-valued
 functions $b_{1},...,b_{d}$, $c$, and $f$
satisfying $|b_{i}|\leq \delta^{-1}$, $ c\geq0$, $|f|\leq \bar{H}+K$
such that in $\Omega$ (a.e.)
$$
\partial_t v^n+a_{ij}D_{ij}v^{n}+b_{i}D_{i}v^{n}-cv^{n}=f.
$$
Now our result follows by the parabolic Alexandrov maximum
principle (see, for instance, Section 3.3
of \cite{Kr85})
 and using the global barrier function
 given, for instance,
in  the proof
of Lemma 8.8 of \cite{Kr11}.  The lemma is proved.
\end{proof}

 Due to this lemma one can drop $\|v^{n}\|_{0,\Omega}$
 on  the right-hand sides of estimates \eqref{10.4.03}
and \eqref{10.4.3}. After that it only
remains to observe that for $n\geq\|v^{n}\|_{0,\Omega}$, the function
$v^{n}$ satisfies \eqref{9.23.2} since $\chi(v^{n}/n)=v^{n}/n$ and
Theorem \ref{theorem 9.23.1} holds in its original form.

Hence, in the rest of the article we suppose that \eqref{3.5.2} holds at
all points of differentiability of $H$ with respect to $u$.

\mysection{Further reductions of Theorem \protect\ref{theorem 9.23.1}}

                                   \label{section 12.13.2}

\noindent{\bf 1}. First, we show that we may additionally assume that
for any
$s,t\in \bR$, $x,y\in\bR^{d}$ and $u=(u',u'')$
\begin{equation}
                                                  \label{9.31.2}
|H(u,t,x)-H(u,s,y)|\leq N
(|t-s|+|x-y|)(1+|u|),
\end{equation}
where $N$ is
independent of $t,s,x,y,u$.

Indeed, if Theorem \ref{theorem 9.23.1} is true in this particular case,
take a nonnegative $\zeta\in C^{\infty}_{0}(\bR^{d+1})$, which integrates
to one,  set $\zeta^{n}(x)=n^{d+1}\zeta(nt,nx)$,
and introduce $ H^{n}(u,t,x)$
as the convolution of $H(u,t,x)$ and $\zeta^{n}$ performed with respect to
$(t,x)$. Observe that $H^{n}$  satisfies \eqref{3.5.2} and Assumption
\ref{assumption 9.23.1} with the same constant $\delta $, whereas
$$
|H^{n}(u,t,x)-H^{n}(u,s,y)|\leq  n
(|t-s|+ |x-y|)
\sup_{r,z}|H(u,r,z)| \|\zeta\|_{\cC^{1}(\bR^{d+1})}
$$
and \eqref{9.31.2} is satisfied since
$$
|H(u,r,z)|\leq |H(0,r,z)|+N(d)\delta^{-1}|u|.
$$ Then assuming that the assertions of
Theorem \ref{theorem 9.23.1} are true under our additional assumption,
we conclude that there exist solutions $v^{n} \in
\cC^{1,1}(\bar{\Omega}_{T})
\cap W^{1,2}_{\infty,\text{loc}} (\Omega_T) $ of
\begin{equation*}
\partial_t v^n+\max(H^{n}[v^{n}],P[v^{n}]-K)=0
\end{equation*}
in $\Omega_T$ (a.e.) with
terminal-boundary condition $v^{n}=g$, for which estimates \eqref{1.13.1} and
\eqref{1.13.2} hold with $v^{n}$ in place of $v$ with the constants $N$
and $N_{p}$ from Theorem \ref{theorem 9.23.1} and with
$$
\overline{H^{n}}=\sup_{(t,x)\in\bR^{d+1}}|H^{n}(0,t,x)| \quad\quad(\leq\bar{H})
$$ in place of $\bar{H}$. In particular,
\begin{equation}
                                              \label{10.3.4}
\partial_t v^m+\check{H}^{n}_{K}[v^{m}]\geq0
\end{equation} in $\Omega_T$ (a.e.) for all
$m\geq n$, where $$ \check{H}^{n}_{K}(u,t,x):=\sup_{k\geq n}
\max(H^{k}(u,t,x),P(u)-K). $$

 Furthermore, being uniformly bounded and uniformly continuous,
the sequence $\{v^{n}\}$ has a subsequence uniformly converging to a
function $v$, for which \eqref{1.13.1} and \eqref{1.13.2}, of course,
hold and $v  \in \cC^{1,1}(\bar{\Omega}_{T})
\cap W^{1,2}_{\infty,\text{loc}} (\Omega_T)   $. In light of \eqref{10.3.4} and the fact that
 the norms $\|v^{n}\|_{W^{1,2}_{p}(\Omega_T)}$ are bounded,
by Theorem 3.5.9 of \cite{Kr85} (the applicability of which
is shown by an argument similar to the one in Remark \ref{remark 2.28.1}) we have
\begin{equation*}
\partial_t v+\check{H}^{n}_{K}[v ]\geq0
\end{equation*}
in $\Omega_{T}$ (a.e.).

Then we notice that by the Lebesgue differentiation theorem
for any $u$
\begin{equation}
                                              \label{10.3.1}
\lim_{n\to\infty}\check{H}^{n}_{K}(u,
t,x)=\max(H(u,t,x),P(u)-K)
\end{equation}
for almost all $(t,x)$. Since $\check{H}^{n}_{K}(u,t,x)$ are
Lipschitz continuous in $u$ with a constant independent of
 $t,x$, and $n$,
there exists a subset of $\Omega_{T}$ of full measure such that
\eqref{10.3.1} holds on this subset for all $u$.

We conclude that in $\Omega_T$ (a.e.)
\begin{equation*}
\partial_t v+\max(H [v],P[v]-K)\geq0.
\end{equation*}
The opposite inequality is obtained by considering $$ \inf_{k\geq n}
\max(H^{k}(u,t,x),P(u)-K). $$

\noindent{\bf  2}. Next,
 we show that one may assume that $H$ is
boundedly inhomogeneous with respect to $u$. Introduce $$
P_{0}(u)=\max_{a\in\cS_{\delta/2}}
\max_{\substack{|b_{i}|\leq2\delta^{-1}\\
i=1,...,d}}\max_{c\in[\delta/2,2
\delta^{-1}]}(a_{ij}u''_{ij}+b_{i}u'_{i}-cu'_{0}), $$ where  the
summations   are performed
 before the maximum is taken.
It is easy to see that $P_{0}[u]$ is a kind of Pucci's operator:
$$
P_{0}(u)=-(\delta/2)\sum_{k=1}^{d}\lambda_{k}^{-}(u'')
+2\delta^{-1}\sum_{k=1}^{d}\lambda_{k}^{+}(u'') $$ $$
+2\delta^{-1}\sum_{k=1}^{d}|u'_{k}|-(\delta/2)(u'_{0})^{+}
+2\delta^{-1}(u'_{0})^{-},
$$
where
$\lambda_{1}(u''),...,\lambda_{d}(u'')$ are the eigenvalues of $u''$ and
$a^{\pm}=(1/2)(|a|\pm a)$.

Recall that the function $P$ is introduced in the end of Section
\ref{section 2.5.1} and observe that
$$
 P(u)  =\max_{\substack{\hat{\delta}/2\leq a_{k}\leq
2\hat{\delta}^{-1} \\k=1,...,m} } \max_{\substack{ |b_{i}|\leq
2\hat{\delta}^{-1}\\i=1,...,d } } \max_{\hat{\delta}/ 2\leq c\leq2
\hat{\delta}^{-1}}\big[\sum_{i,j=1}^{d} \sum_{k=1}^{m}
a_{k}l_{ki}l_{kj}u''_{ij}
 +\sum_{i=1}^{d}b_{i}u'_{i} -cu'_{0}\big].
$$
Moreover,
 owing to property (ii) in the end of Section
\ref{section 2.5.1}, the collection of matrices
$$
\sum_{k=1}^{m}a_{k}l_{k}l_{k}^{*}
$$ such that $\hat{\delta}\leq
a_{k}\leq \hat{\delta}^{-1},k=1,...,m$, covers   $\cS_{\delta/4}$. By
combining this with  the fact that $\hat{\delta} \leq\delta/2 $
(actually, $\hat{\delta} \leq\delta/4$, which will be used much later)
we see that
$$
P(u)\geq -(\delta/4)\sum_{k=1}^{d}\lambda_{k}^{-}(u'')
+4\delta^{-1}\sum_{k=1}^{d}\lambda_{k}^{+}(u'')
$$
$$
+4\delta^{-1}\sum_{k=1}^{d}|u'_{k}|-(\delta/4)(u'_{0})^{+}
+4\delta^{-1}(u'_{0})^{-}
$$
\begin{equation}
                                          \label{3.6.1}
\geq P_{0}(u)+(\delta/4)\sum_{k=1}^{d}|\lambda_{k} (u'')|
+(\delta/4)\sum_{k=0}^{d}|u'_{k}|.
\end{equation}

In particular, $P_{0}\leq P$ and therefore,
$$
\max(H,P-K)=\max(H_{K},P-K),
$$
where $H_{K}=\max(H,P_{0}-K)$. It is
easy to see that the function $H_{K}$ satisfies Assumption
 \ref{assumption 9.23.1} and  \eqref{3.5.2}
 with $\delta/2$  in place of $\delta$.
It also satisfies \eqref{9.31.2} with the same constant $N$.

Furthermore, we have the following.
\begin{lemma}
                                          \label{lemma 9.29.2}
There is a constant $\kappa>0$ depending only on $\delta$ and $d$ such
that  for all $(t,x)\in\Omega_T$ and $u=(u',u'')$
\begin{equation}
                                            \label{9.29.2}
H \leq  P_{0}-\kappa
 \big(\sum_{i,j}|u''_{ij}| +\sum_{i}|u'_{i}|\big)+
 H(0,t,x),
\end{equation}
 \begin{equation}
                                            \label{3.6.3}
H_{K} \leq  P -\kappa
 \big(\sum_{i,j}|u''_{ij}| +\sum_{i}|u'_{i}|\big)+
\ H^{+}(0,t,x).
\end{equation}
Furthermore,
$$
 H(u, t, x) \leq N\big(\sum_{i,j}|u''_{ij}| +\sum_{i}|u'_{i}|\big)+
 H(0, t, x),
$$
$$
|H(u,t,x)|\leq N\big(\sum_{i,j}|u''_{ij}| +\sum_{i}|u'_{i}|\big)+
|H(0,t,x)|,
$$
where the constant $N$ depends only on  $\delta$.
\end{lemma}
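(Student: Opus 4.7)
The plan is to run Hadamard's formula as in \eqref{7.25.1}, but with $H$ itself in place of $H^{n}_K$. For each fixed $(t,x)$, the Lipschitz dependence of $H$ on $u$ together with Fubini gives differentiability of $\theta\mapsto H(\theta u,t,x)$ at a.e.\ $\theta$ for a.e.\ $u$, yielding
\begin{equation*}
H(u,t,x)-H(0,t,x)=\tilde a_{ij}u''_{ij}+\sum_{i=1}^{d}\tilde b_{i}u'_{i}-\tilde c\,u'_{0},
\end{equation*}
where, by Assumption~\ref{assumption 9.23.1}(ii) and the standing \eqref{3.5.2}, $\tilde a\in\cS_{\delta}$, $|\tilde b_{i}|\le\delta^{-1}$, and $\tilde c\in[\delta,\delta^{-1}]$. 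Since both sides of each of the four asserted inequalities are Lipschitz in $u$, it is enough to verify them a.e.\ in $u$. The two unsigned upper bounds at the end of the lemma then follow at once from this representation together with the entrywise estimate $|\tilde a_{ij}|\le\delta^{-1}$.

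The heart of the matter is \eqref{9.29.2}. I would exploit the fact that the parameter ranges in the definition of $P_{0}$ strictly enlarge the bounds on $(\tilde a,\tilde b,\tilde c)$, so one can pick an admissible triple $(\hat a,\hat b,\hat c)$ in the $\max$ defining $P_{0}$ that beats the Hadamard coefficients by a definite amount. Concretely, let $\eta$ be the symmetric matrix simultaneously diagonalizable with $u''$ whose eigenvalue on the $k$-th eigenspace of $u''$ equals $(\delta/2)\operatorname{sgn}\lambda_{k}(u'')$. Weyl's inequality combined with $\|\eta\|\le\delta/2$ yields $\hat a:=\tilde a+\eta\in\cS_{\delta/2}$, and
\begin{equation*}
\eta_{ij}u''_{ij}=(\delta/2)\sum_{k=1}^{d}|\lambda_{k}(u'')|\ge (\delta/(2d^{2}))\sum_{i,j}|u''_{ij}|,
\end{equation*}
where the last step uses $|A_{ij}|\le\max_{k}|\lambda_{k}(A)|$ for symmetric $A$. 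Setting $\hat b_{i}:=\tilde b_{i}+\delta^{-1}\operatorname{sgn}u'_{i}$ keeps $|\hat b_{i}|\le 2\delta^{-1}$ and contributes an extra $\delta^{-1}\sum|u'_{i}|$, while $\hat c:=\tilde c$ is admissible (since $[\delta,\delta^{-1}]\subset[\delta/2,2\delta^{-1}]$) and contributes nothing. Plugging this triple into $P_{0}(u)$ gives \eqref{9.29.2} with $\kappa=\min(\delta/(2d^{2}),\delta^{-1})$.

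For \eqref{3.6.3}, combine \eqref{9.29.2} with the already-established \eqref{3.6.1}, which gives $P\ge P_{0}+(\delta/4)\sum_{k}|\lambda_{k}(u'')|+(\delta/4)\sum_{k=0}^{d}|u'_{k}|$. Reusing the equivalence $\sum|\lambda_{k}(u'')|\ge d^{-2}\sum|u''_{ij}|$, this rewrites as $P_{0}\le P-\kappa_{1}(\sum|u''_{ij}|+\sum|u'_{i}|)$ for some $\kappa_{1}=\kappa_{1}(\delta,d)>0$. Since $K\ge 0$, both $H$ (via \eqref{9.29.2}) and $P_{0}-K$ are then dominated by $P$ minus a positive multiple of $\sum|u''_{ij}|+\sum|u'_{i}|$, plus $H^{+}(0,t,x)$, and the same bound passes to the maximum $H_{K}$.

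The only delicate point is the admissibility $\hat a\in\cS_{\delta/2}$: because $\tilde a$ and $\eta$ need not commute, the spectrum of $\hat a$ cannot be read off term-by-term from the spectra of the summands, and Weyl's inequality is the essential ingredient. Everything else is linear bookkeeping on the ranges of coefficients.
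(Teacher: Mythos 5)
Your overall route is the same as the paper's: apply Hadamard's formula to write $H(u,t,x)-H(0,t,x)=\tilde a_{ij}u''_{ij}+\tilde b_i u'_i-\tilde c\,u'_0$ with $\tilde a\in\cS_\delta$, $|\tilde b_i|\le\delta^{-1}$, $\tilde c\in[\delta,\delta^{-1}]$, and then exploit the strictly larger parameter ranges in the definition of $P_0$ to extract a definite slack. Your explicit choice of a competitor $(\hat a,\hat b,\hat c)$ in the $\max$ defining $P_0$ is a cosmetic variant of the paper's observation that $yp\le y^{+}b-y^{-}a$ for $p\in(a,b)$; the paper writes the slack directly, you package it as a perturbation of $(\tilde a,\tilde b,\tilde c)$. (Incidentally, Weyl's inequality is slight overkill here: $\cS_{\delta/2}$ is a Loewner-order interval, and $\tilde a\ge\delta I$, $\eta\ge-(\delta/2)I$ give $\hat a\ge(\delta/2)I$ directly, and similarly for the upper bound; no spectral decomposition of $\hat a$ is ever needed, commuting or not.)

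There is, however, a genuine small gap. You set $\hat c:=\tilde c$ and note it ``contributes nothing,'' so your argument delivers $H\le P_0-\kappa\big(\sum_{i,j}|u''_{ij}|+\sum_{i\ge1}|u'_i|\big)+H(0,t,x)$, with no slack in $|u'_0|$. But the sum $\sum_i|u'_i|$ in \eqref{9.29.2} must include $i=0$: this is already forced by the last unsigned inequality of the lemma, $|H(u,t,x)|\le N(\sum_{i,j}|u''_{ij}|+\sum_i|u'_i|)+|H(0,t,x)|$, which is false if $u'_0$ is omitted, since under the standing reduction \eqref{3.5.2} we have $-\tilde c u'_0$ with $\tilde c\ge\delta>0$ and this term is unbounded in $u'_0$. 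The paper's proof does produce the $|u'_0|$ slack (it gets $(\delta/2)(u'_0)^{+}+\delta^{-1}(u'_0)^{-}\ge(\delta/2)|u'_0|$). The fix in your framework is the same move you already made for $\hat b$: take $\hat c:=\tilde c-(\delta/2)\operatorname{sgn}(u'_0)$, which stays in $[\delta/2,2\delta^{-1}]$ because $\delta\le 1$, and gains $(\delta/2)|u'_0|$. With that correction and $\kappa=\min(\delta/(2d^2),\delta/2)$ (your $\delta^{-1}$ for the $\hat b$ contribution is larger and hence not the binding one), the proof is complete and matches the paper's.
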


\begin{proof}
Observe that if a number $p\in( a,b)$, $a<b$, and $y\in\bR$,
then $$ yp\leq y^{+}b-y^{-}a. $$ Then from  Hadamard's formula
$$
H (u',u'',t,x)-H (0,0,t,x) =u''_{ij}\int_{0}^{1}H _{u''_{ij}} (su' ,su'',t,x)\,ds
$$ $$ +\sum_{i\geq1} u' _{i}\int_{0}^{1}H _{u' _{i}} (su' ,su'',t,x)\,ds+
u' _{0}\int_{0}^{1}H _{u' _{0}} (su' ,su'',t,x)\,ds
$$
we obtain (see our comments regarding
\eqref{7.25.1})
$$
H (u' ,u'',t,x)-H (0,0,t,x)\leq \delta^{-1}\sum_{k}\lambda^{+}_{k}(u'')- \delta
\sum_{k}\lambda^{-}_{k}(u'')
$$
$$
+\delta^{-1}\sum_{i\geq1}|u' _{i}|
-\delta (u'_{0})^{+}+\delta^{-1} (u'_{0})^{-}=P _{0}(u' ,u'')
$$
$$
-\delta^{-1}\sum_{k}\lambda^{+}_{k}(u'') -(\delta
/2)\sum_{k}\lambda^{-}_{k}(u'') -\delta^{-1}\sum_{i\geq1}|u'_{k}|-
\delta^{-1} (u'_{0})^{-}-(\delta/2) (u'_{0})^{+}
$$ and \eqref{9.29.2}
follows since
$$
\big[\sum_{k}(\lambda^{+}_{k}(u'')
+\lambda^{-}_{k}(u'') ) \big]^{2}= \big(\sum_{k}|\lambda_{k}(u'')|
\big)^{2}
$$
$$
\geq\sum_{k}|\lambda_{k}(u'')|^{2}=
\sum_{i,j}|u''_{ij}|^{2}\geq d^{-2} \big(\sum_{i,j}|u''_{ij}|\big)^{2}.
$$

Estimate \eqref{3.6.3} follows from \eqref{9.29.2} and \eqref{3.6.1}.
Finally, the second assertion of the lemma follows directly from the
above Hadamard's formula.
 The lemma is proved.
\end{proof}

In addition, $H_{K}$ is boundedly inhomogeneous with respect to $u$ in
the sense that at all points of differentiability of $H_{K}(u,t,x)$ with
respect to $u$ \begin{equation}
                                              \label{9.30.01}
|H_{K}(u,t,x)-H_{Ku''_{ij}}(u,t,x)u''_{ij} -H_{Ku'_{r}}(u,t,x)u'_{r}|\leq
N(|H_{K}(0,t,x)| +K),
\end{equation} where $N$ depends only on $\delta$
and $d$.

Indeed, if \begin{equation}
                                            \label{9.30.2}
\kappa
 \big(\sum_{i,j}|u''_{ij}| +\sum_{i}|u'_{i}|\big)
 \geq  H^{+}(0,t,x)+K,
\end{equation} then by Lemma \ref{lemma 9.29.2}
$$
H (u,x)\leq P_{0}(u)-\kappa
 \big(\sum_{i,j}|u''_{ij}| +\sum_{i}|u'_{i}|\big)+
 H^{+}(0,t,x)\leq P_{0}(u)-K,
$$ so that $H_{K}(u,t,x)=P_{0}(u)-K$  and the left-hand side of
\eqref{9.30.01} is just $K$ owing to  the  fact that $P_{0}$ is positive
homogeneous of degree one. On the other hand, if the opposite inequality
holds in \eqref{9.30.2}, then  again in light of Lemma \ref{lemma
9.29.2}  the left-hand side of \eqref{9.30.01} is dominated by
$$
N\big(\sum_{i,j}|u''_{ij}| +\sum_{i}|u'_{i}|\big)+ |H_{K}(0,t,x)|\leq
N(|H_{K}(0,t,x)|+ H^{+}(0,t,x)+K),
$$
where
$$
H(0,t,x)\leq\max(H(0,t,x),-K)=H_{K}(0,t,x),$$
$$
H^{+}(0,t,x)\leq |H_{K}(0,t,x)|.
$$

Furthermore, as we have noticed above $H_{K}$ satisfies Assumption
\ref{assumption 9.23.1}  and \eqref{3.5.2} (with $\delta/2$ in place of
$\delta$) and as is easy to see $|H_{K}[0]|\leq |H[0]|+K$, which shows
that in the rest of the article we may (and will) assume that not only
Assumption \ref{assumption 9.23.1} and
 \eqref{3.5.2} are satisfied with $\delta/2$ in place of $\delta$
and \eqref{9.31.2} holds with a constant $N$, but also at all points of
differentiability of $H$ with respect to $u$ \begin{equation}
                                              \label{3.5.1}
|H (u,t,x)-H_{ u''_{ij}}(u,t,x)u''_{ij} -H_{ u'_{r}}(u,t,x)u'_{r}|\leq N_{0} ,
\end{equation} where $N_{0}$ is a constant and
 \begin{equation}
                                            \label{3.6.4}
H  \leq  P -\kappa
 \big(\sum_{i,j}|u''_{ij}| +\sum_{i}|u'_{i}|\big)+
|H (0,\cdot,\cdot)|,
\end{equation}
where $\kappa$ is the constant from Lemma \ref{lemma 9.29.2}.
By the way we keep track of the value of $\delta$ in Assumption \ref{assumption 9.23.1}  and
\eqref{3.5.2} because $P(u)$ is already fixed and defined by $d$ and
$\delta$.

As a result of the above arguments we see that to prove Theorem
\ref{theorem 9.23.1} it suffices to prove the following.
\begin{theorem}
                                   \label{theorem 10.5.1}
Suppose that Assumption \ref{assumption 9.23.1} is satisfied with
$\delta/2$ in place of $\delta$. Also assume that  \eqref{3.6.4} holds. Finally, assume that
estimate \eqref{9.31.2} holds for any $t,s\in \bR$, $x,y\in\bR^{d}$, and $u=(u',u'')$
with a constant $N$
 and \eqref{3.5.2} and \eqref{3.5.1}
hold  at all points of differentiability of  $H (u,t,x)$ with respect to
$u$.
Then the assertions of Theorem \ref{theorem 9.23.1}  hold true with $P$
introduced in the end of Section \ref{section 2.5.1}.
\end{theorem}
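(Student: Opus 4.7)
The plan is to adapt the elliptic argument of \cite{Kr12.2} to the parabolic setting by discretizing only in the space variable, so that the approximating problem becomes a system of ordinary differential equations in $t$. Fix a small mesh parameter $h>0$, let $\Omega^{h}=\Omega\cap h\bZ^{d}$, and replace each second-order derivative appearing inside $H$ or $P$ by a second-order finite difference along the lattice vectors $l_{1},\dots,l_{m}$ fixed in Section \ref{section 2.5.1}; denote the resulting operators by $H^{h}$ and $P^{h}$. For each $h$ one then considers the finite system
$$
\partial_{t}v^{h}+\max(H^{h}[v^{h}],P^{h}[v^{h}]-K)=0, \qquad v^{h}=g\ \text{on}\ \partial'(\Omega^{h}_{T}).
$$
Because of the non-degeneracy \eqref{3.5.2} and the Lipschitz continuity of $\max(H,P-K)$ in all arguments, this is a non-degenerate system of ODEs on a finite-dimensional space of grid functions and is uniquely solvable on $[0,T]$ by standard ODE theory.

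All subsequent estimates must be derived uniformly in $h$ and then passed to the limit. The sup-norm bound on $v^{h}$ is obtained by rewriting the equation in linear form via Hadamard's formula and applying the discrete parabolic maximum principle with a quadratic barrier, exactly as in Lemma \ref{lemma 10.4.2}. The bound on the first-order differences $\delta_{l}v^{h}$ is obtained by subtracting the equation at $x$ and $x+hl$: using the equivalent discrete formulation in Remark \ref{remark 12.21.1} together with the Lipschitz hypothesis \eqref{9.31.2}, the function $\delta_{l}v^{h}$ satisfies a linear difference inequality with bounded coefficients and bounded right-hand side, and the maximum principle in non-cylindrical domains (Lemma \ref{lemma 7.21.1}) yields $|\delta_{l}v^{h}|\leq N(\bar H+K+\|g\|_{C^{1,2}})$ up to the parabolic boundary.

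The heart of the argument is the two-sided control of the second-order differences $\delta^{2}_{l}v^{h}$, and this is where I expect the main obstacle. The lower bound $\delta^{2}_{l}v^{h}\geq -N$ is obtained by Bernstein's method applied to the discrete equation: one studies an auxiliary function of the form $-\rho^{2}\delta^{2}_{l}v^{h}+\mu|\delta v^{h}|^{2}$ at an interior maximum, using a second finite-difference of the equation and the uniform parabolicity encoded in Remark \ref{remark 12.21.1}; the constant-coefficient version is treated in Section \ref{section 9.22.1} and then upgraded to $(t,x)$-dependent $H$ via \eqref{9.31.2} in Section \ref{section 10.18.2}. For the \emph{upper} bound the parabolic structure introduces a genuine new difficulty compared with \cite{Kr12.2}: the equation directly gives $\max(H^{h}[v^{h}],P^{h}[v^{h}]-K)=-\partial_{t}v^{h}$, and by the coercivity \eqref{3.6.1} (or \eqref{3.6.4}) the quantity $P^{h}[v^{h}]$ dominates $\kappa\sum|\delta^{2}_{l}v^{h}|$ once the already-established first-difference and lower second-difference estimates are used. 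Hence an upper bound on $\delta^{2}_{l}v^{h}$ follows from a bound on $\partial_{t}v^{h}$, which is itself produced by differencing the equation in $t$ and again invoking Lemma \ref{lemma 7.21.1}, using that at $t=T$ the value of $\partial_{t}v^{h}$ is determined by $g$ and the equation. This three-step chain---Bernstein for the lower bound, maximum principle for $\partial_{t}v^{h}$, equation-based closure for the upper bound---is carried out in Sections \ref{section 10.18.2} and \ref{section 12.13.4}.

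Once uniform estimates on $v^{h}$, $\delta_{l}v^{h}$, $\rho\,\delta^{2}_{l}v^{h}$ and $\partial_{t}v^{h}$ are in hand, equicontinuity gives a subsequence with $v^{h}\to v$ uniformly on $\bar\Omega_{T}$, and the interior difference bounds place $v\in \cC^{1,1}(\bar\Omega_{T})\cap W^{1,2}_{\infty,\text{loc}}(\Omega_{T})$ and yield \eqref{1.13.1}. Stability of fully nonlinear equations (Theorem 3.5.9 of \cite{Kr85}, whose hypotheses are verified via Remark \ref{remark 12.21.1}, as in the passages in Section \ref{section 12.13.2}) identifies the limit as an a.e.\ solution of \eqref{9.23.2}. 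Linearizing the equation through Hadamard's formula as in Lemma \ref{lemma 10.4.2} and Remark \ref{remark 2.28.1} rewrites it as a uniformly parabolic linear equation with bounded measurable coefficients and bounded right-hand side, whereupon the global $W^{1,2}_{p}$ estimate and the interior/boundary H\"older estimate of the standard linear theory deliver \eqref{1.13.2} and \eqref{2.28.1}. Uniqueness in the stated class is a classical consequence of the parabolic Aleksandrov--Krylov maximum principle.
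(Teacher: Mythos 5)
Your high-level strategy (space-only discretization, maximum principle, Bernstein, control of $\partial_t$, passage to the limit) matches what the paper does in Sections~\ref{section 9.22.1}--\ref{section 10.18.2}, but the proposal skips the step that those sections are actually built to exploit, and so it has a genuine gap. You propose to ``replace each second-order derivative appearing inside $H$ or $P$ by a second-order finite difference along the lattice vectors $l_1,\dots,l_m$.'' But $H(u',u'',t,x)$ depends on the full symmetric matrix $u''=D^2v$, including the mixed entries $D_{ij}v$, and one cannot simply substitute $\Delta_{l_k}v$ for $D_{ij}v$. The entire machinery of Sections~\ref{section 9.22.1} and~\ref{section 10.18.2} (Lemma~\ref{lemma 7.21.1}, Theorem~\ref{theorem 9.18.1}, Theorem~\ref{theorem 9.14.1}) is set up \emph{only} for operators $\cH(u,\delta u,\delta^{2}u,t,x)$ where $\delta^{2}u$ consists of \emph{pure} directional second differences $\Delta_{\ell_k}u$. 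The missing ingredient, and the reason assumption~\eqref{3.5.1} was imposed in Theorem~\ref{theorem 10.5.1} at all, is the representation result Theorem~7.1 of~\cite{Kr11}: because $H$ is boundedly inhomogeneous, one can construct $\cH(z',z'',t,x)$ with $z''\in\bR^{m}$ so that $\cH(u',\langle u''l_{1},l_{1}\rangle,\dots,\langle u''l_{m},l_{m}\rangle,t,x)=H(u',u'',t,x)$ and $\cH$ satisfies Assumption~\ref{assumption 9.23.01}. Only then does $H[v]=\cH[v]$ with $\cH[v]$ involving only $v_{(l_k)(l_k)}$, and only then does Theorem~\ref{theorem 9.14.1} apply. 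Your proposal never explains how $H$ becomes a function of pure directional second derivatives, and without this the discretization you describe is ill-defined (or falls outside the scope of the lemmas you want to invoke). This is exactly what Remark~\ref{remark 8.4.1} flags as the sole purpose of~\eqref{3.5.1}.

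A second, smaller error: you claim that linearizing via Hadamard's formula and appealing to ``the global $W^{1,2}_{p}$ estimate of the standard linear theory'' gives~\eqref{1.13.2}. Linear parabolic equations with merely bounded measurable leading coefficients do \emph{not} admit $W^{1,2}_{p}$ estimates for $p\neq2$; this is not a consequence of linear theory. The paper instead writes the equation as $\partial_t v+P[v]=-G$ where, thanks to~\eqref{3.6.4}, $G(v,Dv,D^{2}v,t,x)$ is bounded, and then invokes Theorem~1.2 of~\cite{DKL} for the \emph{convex} operator $P$ with bounded free term; the solution $v$ is identified with the $W^{1,2}_p$ solution of this auxiliary equation by uniqueness. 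That structural observation, not linear $L_p$ theory, is what delivers~\eqref{1.13.2}.
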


\begin{remark}
                                      \label{remark 8.4.1}
One may wonder why we need \eqref{3.5.1}
with a constant which does not enter the assertions
of Theorem \ref{theorem 10.5.1} in any way. The only reason
to reduce general $H$ to boundedly inhomogeneous ones
is that for those we can rewrite $H[v]$
 in such a way that only  pure
second-order derivatives of $v(t,x)$
with respect to $x$
enter. Then the whole operator $\max(H[v],P[v]-K)$
also has this form.

Another possible question is: Why don't we start
with $\max(H,P-K)$, which is already boundedly inhomogeneous
by the above? The point is that our way to transform
boundedly inhomogeneous operators does not preserve
the particular structure of $\max(H,P-K)$.
\end{remark}

\mysection{An auxiliary equation}

                                       \label{section 9.22.1}
Some notation in this section are different from the previous ones. Fix
an $h\in(0,1]$ and for $\xi\in\bR^{d}$ and any function $\phi$ on
$\bR^{d}$ introduce $$ T_{\xi}\phi(x)=\phi(x+h\xi),\quad
\delta_{\xi}=h^{-1}(T_{\xi}-1),\quad\Delta_{\xi}
=h^{-2}(T_{\xi}-2+T_{-\xi}). $$ Notice that $h$ enters the definition of
$T_{\xi}$ and $\delta_{\xi}$ and $\Delta_{\xi}$ are usual approximations
for the first and second-order derivatives along $\xi$.

Let $m\geq1$ be an integer and let
$\ell_{-m},...,\ell_{-1},\ell_{1},...,\ell_{m}$ be some fixed vectors in
$\bR^{d}$ such that $$ \ell_{-k}=-\ell_{k}. $$ Next denote
$\Lambda=\{\ell_{k}:k=\pm1,...,\pm m\}$, $$ \Lambda_{1}= \Lambda,\quad
\Lambda_{n+1}=
 \Lambda_{n}+ \Lambda ,\quad n\geq1,
\quad \Lambda_{\infty}=\bigcup_{n}\Lambda_{n}
\quad \Lambda^{h}_{\infty}=h\Lambda_{\infty}
\,. $$

Let $m'\geq0$ be an integer $\leq m$ and let $A=\{\alpha=(a,b,c)\}$ be a
closed bounded set in $\bR^{2m }\times\bR^{m'}\times \bR$, so that $$
a=(a_{-m},a_{-m+1},...,a_{-1},a_{1},...,a_{m})\in\bR^{2m}, $$ $$ b= (
b_{1},...,b_{m'})\in\bR^{ m'}, $$ and $c\in\bR$. Also let $f(\alpha,t,x)$
be a real-valued function defined for $\alpha\in A$,
$t\in\bR$, and $x\in\bR^{d}$.

Fix an $r\in\{1,...,m\}$ and for $k=\pm 1,...,\pm m$ set $$
\delta_{h,k}=\delta_{k}=\delta_{\ell_{k}},\quad
\Delta_{h,k}=\Delta_{k}=\Delta_{\ell_{k}}. $$

\begin{assumption}
                                   \label{assumption 9.18.1}
There are constants $\delta>0$ and $K_{1},K_{2} \in[0,\infty)$
  such that

(i) For any $(a,b,c)\in A$ and all $k$ we have $$ a_{k}=a_{-k},\quad
\delta \leq a_{k}\leq\delta^{-1},\quad|b_{k}|\leq\delta^{-1}, \quad
hb_{k}^{-}\leq  a_{k}, \quad c\geq0; $$

(ii) The function $f$ is continuous in $\alpha$ for any $(t,x)$ and
$|\delta_{r}f|\leq K_{1}$, $\Delta_{r}f\geq-K_{2}$ on $\bR^{d}$.

\end{assumption}

For $u=(u',u'')$ with
$$
u'=(u'_{0}, u'_{ 1},...,u'_{m'}),\quad
 u'' =(u''_{-m},...,u''_{-1},u''_{1},...,u''_{m}),
$$
introduce
$$
\cP(u,t,x)=\max_{\alpha=(a,b,c)\in A}\big(\sum_{|k|=1}^{m}
a_{k}u''_{k}+\sum_{ k =1}^{m'} b_{k}u'_{k} -cu'_{0}+f(\alpha,t,x)\big).
$$

For any function $u$ on $\bR^{d+1}$ define
$$
\cP[u](t,x)=\cP(u(t,x),\delta
u(t,x),\delta^{2}u(t,x) ,t,x),
$$ where $$ \delta u=( \delta_{1}u ,...,
\delta_{m'}u ), $$ $$ \delta^{2} u=(\Delta_{-m}u ,..., \Delta_{-1}u
,\Delta_{1}u ,..., \Delta_{m}u). $$ In connection with this notation a
natural question arises as to why use $\ell_{k}$ along with
$\ell_{-k}=-\ell_{k}$ since $\Delta_{k}=\Delta_{-k}$ and $$
a_{k}\Delta_{k}=2\sum_{k\geq1}a_{ k}\Delta_{k} $$ owing to the
assumption that $a_{k}=a_{-k}$. This is done for the sake of convenience
of computations. For instance,
$$
\Delta_{k}(uv)=u\Delta_{k}v+v\Delta_{k}u+(\delta_{k}u)(\delta_{k}v)
+(\delta_{-k}u)(\delta_{-k}v)
$$ (no summation in $k$). At
the same time
$$ a_{k}\Delta_{k}(uv)=ua_{k}\Delta_{k}v+va_{k}
\Delta_{k}u
+2a_{k}(\delta_{k}u)(\delta_{k}v)
$$
as if we were dealing with usual
partial derivatives.

Let
$Q^{o}$ be  a bounded subset
of $\bR\times \Lambda_{\infty}^{h}$, which is open
in the relative topology of
$\bR \times\Lambda_{\infty}^{h}$
and is
such that its projection on $\Lambda_{\infty}^{h}$ is
a finite set.
Introduce $\hat{Q}^{o}$ as the
set of points
$(t_{0},x_{0})\in\bR\times\Lambda_{\infty}^{h}$
for each of which there exists a sequence
$t_{n}\uparrow t_{0}$   such that
  $(t_{n},x_{0})\in Q^{o} $.
Observe that $Q^{o}\subset\hat{Q}^{o}$.
Also define
$$
Q=\hat{Q}^{o}\cup\{(t,x+h\Lambda):(t,x)\in Q^{o}\}.
$$

For $x\in  \Lambda_{\infty}^{h}$
we denote by $Q^{o}_{|x}$ the $x$-section of $Q^{o}$:
$\{t:(t,x)\in Q^{o}\}$.

In the future we will need the following.

\begin{lemma}
                                       \label{lemma 7.21.1}
Let $(a,b,c)(t,x)$ be
a bounded $\bR^{2m }\times\bR^{m'}\times \bR$-valued
(say $A$-valued) function on
$\bR^{d+1}$  satisfying $a_k\ge 0$,
$hb_{k}^{-}\leq  a_{k}$, and $c\ge 0$,
and let $v(t,x)$ be a bounded
function in $Q $ which is absolutely continuous
with respect to $t$
 on each open interval belonging to $Q^{o}_{|x}$
and for any $x\in  \Lambda_{\infty}^{ h}$
satisfies
$$
\partial_{t}v+Lv:=
\partial_{t}v+\sum_{|k|=1}^{m}a_{k}\Delta_{k}v
+\sum_{k=1}^{m'}b_{k} \delta_{k}v-cv=-\eta
$$
(a.e.) on each $Q^{o}_{|x}$, where $\eta=\eta(t,x)$
 is a bounded function. Redefine $v$ if necessary
for $(t,x)\in\hat{Q}^{o}\setminus Q^{o}$ so that
$$
v(t,x)=\nlimsup_{s\uparrow t,(s,x)\in Q^{o}}v(s,x).
$$
Finally, let $T$ be the width of
$Q^{o}$ in the $t$-direction.
Then in $Q^{o}$ we have
$$
v\leq T\sup_{Q^{o}}\eta_{+}+\sup_{Q\setminus Q^{o}}
v_{+}.
$$

\end{lemma}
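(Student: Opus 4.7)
The plan is to prove this as a discrete-spatial, continuous-time parabolic maximum principle, exploiting the fact that the hypothesis $hb_k^-\le a_k$ makes $L$ a monotone finite-difference operator; namely, one may write $L\psi(x_0)=\sum_y A_y(\psi(y)-\psi(x_0))-c\psi(x_0)$ with nonnegative coefficients $A_y\geq 0$ over neighbors $y=x_0\pm h\ell_k$. First I would perform a supersolution reduction. Set $M_1=\sup_{Q^o}\eta_+$, $M_2=\sup_{Q\setminus Q^o}v_+$, let $t_1$ be the supremum of $t$-coordinates in $Q^o$, and define the spatially constant $\phi(t)=(t_1-t)M_1+M_2\geq 0$. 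Since $\partial_t\phi=-M_1$ and $L\phi=-c\phi\leq 0$, the function $\tilde v=v-\phi$ satisfies $\partial_t\tilde v+L\tilde v=M_1-\eta+c\phi\geq 0$ a.e.\ on each $Q^o_{|x}$ and $\tilde v\leq v_+-M_2\leq 0$ on $Q\setminus Q^o$; the conclusion reduces to showing $\tilde v\leq 0$ on $Q^o$. To get a strict inequality, I would then set $\tilde v_\mu=\tilde v-\mu(t_1-t+1)$ for $\mu>0$; using $c\geq 0$, one checks $\partial_t\tilde v_\mu+L\tilde v_\mu\geq \mu>0$ a.e.\ and $\tilde v_\mu\leq-\mu$ on $Q\setminus Q^o$. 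It suffices to prove $\tilde v_\mu\leq 0$ on $Q^o$ and let $\mu\downarrow 0$.

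Suppose for contradiction $M^*:=\sup_{Q^o}\tilde v_\mu>0$ and fix $\kappa\in(0,\min(\mu,M^*))$. Since the spatial projection of $Q^o$ is finite, the pigeonhole principle produces an $x^*$ in this projection and times $t_n\uparrow t_+$ with $(t_n,x^*)\in Q^o$ and $\tilde v_\mu(t_n,x^*)\geq M^*-\kappa$, where
\[
t_+:=\sup\{t:\exists x,\,(t,x)\in Q^o,\,\tilde v_\mu(t,x)\geq M^*-\kappa\}.
\]
The $\nlimsup$ extension yields $(t_+,x^*)\in\hat Q^o$ and $\tilde v_\mu(t_+,x^*)\geq M^*-\kappa$. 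If $(t_+,x^*)\in\hat Q^o\setminus Q^o\subset Q\setminus Q^o$, then the boundary bound gives $\tilde v_\mu(t_+,x^*)\leq-\mu$, contradicting $\tilde v_\mu(t_+,x^*)\geq M^*-\kappa>0$ because $\kappa<\mu$. Otherwise $(t_+,x^*)\in Q^o$, so $t_+$ lies in some open component $(\alpha,\beta)\subset Q^o_{|x^*}$. For every $s\in(t_+,\beta)$ and every $y$ with $(s,y)\in Q$ one has $\tilde v_\mu(s,y)<M^*-\kappa$ (either by the definition of $t_+$ if $(s,y)\in Q^o$, or because $\tilde v_\mu(s,y)\leq -\mu<M^*-\kappa$ if $(s,y)\in Q\setminus Q^o$). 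By continuity of $\tilde v_\mu(\cdot,x^*)$ on $(\alpha,\beta)$, this forces $\tilde v_\mu(t_+,x^*)=M^*-\kappa$. The monotone representation of $L$ then gives
\[
L\tilde v_\mu(s,x^*)\leq \Bigl(\sum_y A_y\Bigr)\bigl(M^*-\kappa-\tilde v_\mu(s,x^*)\bigr)-c\tilde v_\mu(s,x^*),
\]
whose right-hand side tends to $-c(M^*-\kappa)\leq 0$ as $s\downarrow t_+$. Integrating $\partial_s\tilde v_\mu\geq \mu-L\tilde v_\mu$ over $(t_+,t_++\delta)$ and using $\tilde v_\mu(t_++\delta,x^*)<\tilde v_\mu(t_+,x^*)$ yields $\delta^{-1}\int_{t_+}^{t_++\delta}L\tilde v_\mu(s,x^*)\,ds>\mu$; letting $\delta\downarrow 0$ produces $0\geq\mu$, a contradiction.

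The main obstacle is the non-cylindrical geometry of $Q^o$, whose $t$-sections $Q^o_{|x}$ may be a countable union of open intervals with right endpoints redefined via $\nlimsup$. The two-case split driven by whether $(t_+,x^*)$ lies in $Q^o$ or in $\hat Q^o\setminus Q^o$ is what tames this: the former case is handled by the standard interior maximum-principle argument whose success depends on the monotone structure secured by $hb_k^-\leq a_k$, while the latter case collapses into an immediate contradiction with the boundary inequality inherited from the hypothesis $v\leq\sup_{Q\setminus Q^o}v_+$.
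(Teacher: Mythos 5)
Your proof is correct, and while it rests on the same two ingredients as the paper's --- a supersolution reduction to a strict differential inequality and a maximum-principle argument exploiting the monotone representation $L\psi(x_0)=\sum_y A_y(\psi(y)-\psi(x_0))-c\psi(x_0)$ secured by $a_k\ge 0$, $hb_k^-\le a_k$, $c\ge 0$ --- the implementation is genuinely different. The paper subtracts $(T-t)[2\varepsilon+\sup_{Q^o}\eta_+]$ to reduce to $\eta\le-2\varepsilon$, takes a maximizing sequence $(t_n,x_0)\in Q^o$ for $\sup_Q v$ with $t_n\downarrow t_0$, and splits according to whether the connected components $I_n$ of $Q^o_{|x_0}$ containing $t_n$ shrink: if $\nliminf|I_n|>0$ the intervals stabilize, $\lim_{t\downarrow t_0}v(t,x_0)=\sup_Q v$, and one finds $s_n\downarrow t_0$ with $Lv(s_n,x_0)\ge\varepsilon$, contradicting the maximum-principle bound $Lv_n(s_n,x_0)\le 0$ on the one-point modification $v_n$ of $v(s_n,\cdot)$ since the discrepancy $\xi_n\to 0$; if $\nliminf|I_n|=0$ the bounded time derivative transports the near-maximum to the right endpoints of $I_n$, which lie in $\hat Q^o\setminus Q^o$. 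You instead define $t_+$ as the latest time at which $\tilde v_\mu$ comes within $\kappa$ of its supremum; since $Q^o$ is open, $(t_+,x^*)\in Q^o$ automatically puts an entire interval $(t_+,\beta)$ inside $Q^o_{|x^*}$, where the strict inequality $\partial_s\tilde v_\mu+L\tilde v_\mu\ge\mu$ contradicts the limiting bound $\nlimsup_{s\downarrow t_+}L\tilde v_\mu(s,x^*)\le 0$ coming from the definition of $t_+$ and the boundary bound. This sidesteps the $|I_n|\to 0$ dichotomy entirely, at the price of two auxiliary parameters ($\mu$, $\kappa$) rather than one. One small imprecision worth fixing: when the supremum defining $t_+$ is attained at some $(t_+,x^{**})\in Q^o$, the pigeonhole step does not produce a strictly increasing sequence $t_n\uparrow t_+$ with $\tilde v_\mu(t_n,x^{**})\ge M^*-\kappa$; simply take $x^*=x^{**}$ directly and note that your continuity argument, applied on the component $(\alpha,\beta)\ni t_+$, still yields $\tilde v_\mu(t_+,x^*)=M^*-\kappa$, after which the rest of the argument goes through unchanged.
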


\begin{proof}
Without losing generality we   assume that
$Q^{o}\in(0,T)\times\Lambda_{\infty}^{h}$. Then by considering
$$
v(t,x)-(T-t)[2\varepsilon+\sup_{Q^{o}}\eta_{+}],
$$
where $\varepsilon>0$, and then sending $\varepsilon\downarrow
0$,
we reduce the general case to the one with $\eta\leq-2
\varepsilon$.
Finally, we make one more harmless assumption that
$$
\sup_{Q}v>0.
$$

After that take a sequence $(t_{n},x_{n})\in Q$
such that
$$
v(t_{n},x_{n})\to \bar{v}:=\sup_{Q}v>0.
$$
If infinitely many points $(t_{n},x_{n})\not\in Q^{o}$,
then we have nothing to prove.

In the opposite case we may assume that $x_{n}=x_{0}$,
 $(t_{n},x_{n}) \in Q^{o}$ for all $n$, and the sequence
$t_{n}$ converges, say to $t_{0}$.
Denote by $I_{n}$ the  connected component (open interval)
of $Q^{o}_{|x_{0}}$ containing $t_{n}$.
By using subsequences
if needed and taking into account
the continuity of $v$ in $Q^{o}$
we come to  three  possibilities: either
$(t_{0},x_{0})\in\hat{Q}^{o}
\setminus Q^{o}$ and we have nothing to prove,
 or $(t_0,x_0)\in Q^o$,  or
else $t_{n}\downarrow
t_{0}$. Note that the second case can be
reduced to the third one by redefining
the $t_{n}$'s.

If the  third  possibility
 realizes, we claim that
\begin{equation}
                                           \label{7.22.1}
\nliminf_{n\to\infty}|I_{n}|=0 ,
\end{equation}
where $|I_{n}|$
is the length of $I_{n}$.

Indeed if \eqref{7.22.1} fails, then for all large $n$
the intervals $I_{n}$ coincide. Also in that case
there is an open interval $I\in\bR$ such that
$$
I\times\{x_{0}\}\subset Q^{o},\quad
I\times\{x_{0}+ h\Lambda\}\subset Q .
$$
Furthermore, $\partial_{t}v(t,x_{0})$ is bounded
on $I$, so that the limit of $v(t,x_{0})$
as $t\downarrow t_{0}$ exists and
$$
 \lim_{t\downarrow t_{0}}v(t,x_{0})=
\lim_{n\to\infty}v(t_{n},x_{0})=\bar{v}
$$
 In addition, $\bar{v} \geq v(t,x_{0})$
for $t>t_{0}$ and, since
$$
\partial_{t}v(t,x_{0})=
- Lv(t,x_{0})
-\eta(t,x_{0})
$$
for almost all $t\in I$, there exists
a sequence of
points $s_{n}\in I$ such that $s_{n}\downarrow t_{0}$ and
$$
Lv(s_{n},x_{0})+\eta(s_{n},x_{0})\geq-
\varepsilon
$$
implying that (recall that $\eta\leq-2\varepsilon$)
\begin{equation}
                                          \label{7.23.1}
Lv(s_{n},x_{0})\geq
\varepsilon.
\end{equation}

Next, consider the functions $v_{n}(x)=v(s_{n},x)$, $x\ne x_{0}$,
$v_{n}(x_{0})=\bar{v}$, for which $v_{n}(x_{0})\geq
v_{n}(x )$ for all $x\in x_{0}+h\Lambda$.
On the one hand, by the maximum principle we have
$Lv_{n}( s_{n},  x_{0})\leq0$ and, on the other
hand
$$ Lv_{n}(  s_{n}, x_{0})=Lv(s_{n},x_{0})+\xi_{n},
$$ where
$$
\xi_{n}=2h^{-2}\sum_{|k|=1}^{m}
a_{k}(s_{n},x_{0})[v(s_{n},x_{0})-\bar{v}]
$$
$$
+h^{-1}\sum_{k=1}^{m'}
b_{k}(s_{n},x_{0})[v(s_{n},x_{0})-\bar{v}]
+c(s_{n},x_{0})[v(s_{n},x_{0})-\bar{v}]
$$
and $\xi_{n}\to0$ as $n\to\infty$. This leads
to a contradiction with \eqref{7.23.1} and
proves \eqref{7.22.1}.

It follows that for infinitely many $n$, as $n$
increases, the value of $v$
at $(t_{n},x_{0})$
will become closer and closer
 to its value at the right end  points of
$I_{n}$'s since the time derivative of $v$ is bounded
and this proves the lemma.
\end{proof}

Next,
take a function $\eta\in C^{\infty}(\bR^{d})$ with bounded
derivatives, such that $|\eta|\leq1$ and set
$\zeta=\eta^{2}$, $$ |\eta'(x)| =|\eta'(x)|_{h} =\sup_{
k}|\delta_{ k}\eta  (x)|,\quad |\eta''(x)| =|\eta''(x)|_{h}
=\sup_{ k}|
\Delta_{ k}\eta  (x)|, $$ $$
\|\eta'\|=\|\eta'\|_{h}=\sup_{ \Lambda_{\infty}^{ h }}|\eta'
|_{h},\quad
\|\eta''\|=\|\eta''\|_{h}=\sup_{
\Lambda_{\infty}^{ h}}|\eta''
|_{h}, $$

Finally, let $u$ be a function on $\bR^{d+1}$ which is
continuously differentiable  with  respect
to $t$ and satisfies
\begin{equation}
                                                \label{9.19.9}
\partial_t u +\cP[u]=0 \quad\text{in}\,\,Q^{o}
\end{equation}
 and
\begin{equation}
                                        \label{eq4.48}
\partial_t u +\cP[u]\leq 0\quad
\text{on}\,\, Q \setminus Q^{o} . \end{equation}

\begin{theorem}
                                      \label{theorem 9.18.1}
There  exist constants $N=N(m,\delta)\geq1$
 and $N^{*}=N^{*}(m,\delta)$
such that for   any constant $\nu$ satisfying $$ \nu\geq N^{*}
\|\eta'\|+N (\|\eta''\|+\|\eta'\|^{2}), $$ we have in
$Q^{o}$ that  (recall
that  $a^{\pm}=(1/2)(|a|\pm a)$)
\begin{equation}
                                               \label{9.18.3}
\zeta^{2}[ ( \Delta_{r}u)^{-}]^{2}\leq \sup_{
 Q\setminus Q^{o} }\zeta^{2}[
( \Delta_{r}u)^{-}]^{2} +(N\nu+N^{*})\bar{W}_{r}+N\nu^{-2}K_{2}^{2} +
\nu^{-1}K_{1}^{2},
\end{equation}
where
$$
\bar{W}_{r}=\sup_{ Q }(|\delta_{r}u|^{2}+ |\delta_{-r}u|^{2}).
$$
Furthermore, $N^{*}=0$ if $b\equiv0$.

\end{theorem}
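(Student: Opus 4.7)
The plan is to implement a discrete Bernstein estimate, with Lemma~\ref{lemma 7.21.1} providing the concluding maximum principle.

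\textit{Linearization.} At each $(t_0,x_0)\in Q^{o}$ I fix a maximizer $\bar\alpha=\bar\alpha(t_0,x_0)\in A$ in the definition of $\cP[u](t_0,x_0)$ and write $L_{\bar\alpha}=a_k(\bar\alpha)\Delta_k+b_k(\bar\alpha)\delta_k-c(\bar\alpha)$. The maximizing coefficients do not depend on $x$, so $L_{\bar\alpha}$ commutes with $\Delta_r$ and $\delta_r$. Since $\cP[u]\ge L_{\bar\alpha}[u]+f(\bar\alpha,\cdot)$ everywhere with equality at $(t_0,x_0)$, combining the equality at $(t_0,x_0)$ with the one-sided inequalities at $(t_0,x_0\pm h\ell_r)$ and invoking $\Delta_r f(\bar\alpha,\cdot)\ge -K_2$ from Assumption~\ref{assumption 9.18.1}(ii), I arrive at the pointwise differential inequality
\[
\partial_t W+L_{\bar\alpha}W\le K_2,\qquad W:=\Delta_r u,
\]
at $(t_0,x_0)$. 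An analogous first-difference manipulation using $|\delta_r f|\le K_1$ produces $K_1$-controlled two-sided bounds on $\partial_t\delta_{\pm r}u+L_{\bar\alpha}\delta_{\pm r}u$.

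\textit{Bernstein auxiliary function.} Next I set $\Phi:=\zeta^{2}[(\Delta_r u)^{-}]^{2}$ on $Q$ and compute $(\partial_t+L_{\bar\alpha})\Phi$ by iterating the discrete product rules
\[
\Delta_k(fg)=f\Delta_k g+g\Delta_k f+(\delta_k f)(\delta_k g)+(\delta_{-k}f)(\delta_{-k}g),
\]
\[
\delta_k(fg)=f\delta_k g+g\delta_k f+h(\delta_k f)(\delta_k g).
\]
This splits the output into (i) a principal term proportional to $\zeta^{2}(\Delta_r u)^{-}(\partial_t+L_{\bar\alpha})W$, which is controlled by $K_2$ via the preceding step; (ii) a ``gradient-squared'' piece $\sum_{|k|=1}^{m}a_k\zeta^{2}[(\delta_k W)^{2}+(\delta_{-k}W)^{2}]$ with the favorable sign (because $a_k\ge\delta>0$ and $hb_k^{-}\le a_k$, the latter taming the $hb_k(\delta_k\cdot)^{2}$ correction in the $\delta_k$-product rule); and (iii) mixed terms involving $\delta_k\zeta$ and $\Delta_k\zeta$ paired with $\delta_k W$ and $(W)^{-}$.

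\textit{Young absorption and conclusion.} The cross terms in (iii) are absorbed into (ii) via Young's inequality with parameter $\nu$: a typical split
$
2(\delta_k\zeta^{2})(\delta_k[(W)^{-}]^{2})\le \nu\zeta^{2}(\delta_k W)^{2}+C\nu^{-1}(\delta_k\zeta)^{2}((W)^{-})^{2}
$
works for $\nu\ge N^{*}\|\eta'\|+N(\|\eta''\|+\|\eta'\|^{2})$; the $N^{*}\|\eta'\|$ threshold arises solely from the $b_k\delta_k$-component of $L_{\bar\alpha}$, which explains why $N^{*}=0$ in the case $b\equiv 0$. The residual $\nu^{-1}$-weighted pieces are then estimated using $|\delta_{\pm r}u|\le\bar W_r^{1/2}$ together with the first-difference bound from the linearization step, producing the $(N\nu+N^{*})\bar W_r$ and $\nu^{-1}K_1^{2}$ contributions. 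A further Young split of the principal term against $K_2$ yields $\nu^{-2}K_2^{2}$. The outcome is an inequality $\partial_t\Phi+\tilde L\Phi\le-\tilde\eta$ with $\tilde L$ satisfying the coefficient requirements of Lemma~\ref{lemma 7.21.1} and $\sup\tilde\eta_{+}$ dominated by $N\nu\bar W_r+N\nu^{-2}K_2^{2}+\nu^{-1}K_1^{2}$; invoking Lemma~\ref{lemma 7.21.1} then delivers \eqref{9.18.3}.

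\textit{Main obstacle.} The crux is the precise bookkeeping of the discrete product rules: the correction $h(\delta_k f)(\delta_k g)$ in $\delta_k(fg)$ has no continuous analogue and must be neutralized so that the good quantity in~(ii) retains its sign uniformly in $h\in(0,1]$, which is exactly what $hb_k^{-}\le a_k$ accomplishes. The secondary algebraic challenge is tracking the $\nu$-power structure of the Young splits so that $K_1^{2}$ acquires the prefactor $\nu^{-1}$ while $K_2^{2}$ acquires $\nu^{-2}$, which in turn requires applying Young's inequality to the principal term and to the cross terms at different ``strengths''.
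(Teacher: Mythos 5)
Your plan diverges from the paper's proof at a structural level, and the divergence leaves a genuine gap. The paper does \emph{not} conclude Theorem~\ref{theorem 9.18.1} by deriving a differential inequality for $\zeta^{2}[(\Delta_r u)^{-}]^{2}$ and then invoking Lemma~\ref{lemma 7.21.1}. Instead it works pointwise at the maximizer $(t_0,x_0)$ of the modified Bernstein function
$V_{\nu}=\zeta^{2}(u_{rr}^{-})^{2}+\nu\zeta u_{r}^{2}$,
whose gradient piece $\nu\zeta u_{r}^{2}$ you have discarded, and the operator that gets applied to the linearization is not $L_{\bar\alpha}$ but the tailor-made
$L_{\nu}\phi=\zeta^{2}u_{rr}^{-}\Delta_{r}\phi-\nu\zeta u_{r}\delta_{r}\phi$
with the identity $L_{\nu}u=-V_{\nu}$. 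Lemma~\ref{lemma 9.20.1} then ties $2L_{\nu}[\partial_t+a_k\Delta_k+b_k\delta_k]u$ to $-[\partial_t+a_k\Delta_k+b_k\delta_k]V_{\nu}$ while producing the favorable term $(\nu/2)\zeta a_k u_{kr}^{2}$; it is precisely the gradient part of $V_{\nu}$ that manufactures this quantity, and it is the \emph{mixed second differences} $u_{kr}=-\delta_{-k}\delta_r u$, not your third differences $\delta_k W$, that close the estimate. At the maximizer, $\partial_t V_\nu\leq 0$ and $[a_k\Delta_k+b_k\delta_k]V_\nu\leq 0$ (maximality), and the condition $\zeta u_{rr}^{-}>\nu h u_r$ guarantees that $L_\nu$ respects the maximum principle, after which one bounds $V_\nu(t_0,x_0)$ directly with no time-integration.

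The most concrete defect in your route is this: Lemma~\ref{lemma 7.21.1} yields
$\Phi\leq T\sup_{Q^{o}}\tilde\eta_{+}+\sup_{Q\setminus Q^{o}}\Phi_{+}$
where $T$ is the width of $Q^{o}$ in the $t$-direction, so the inhomogeneous terms in your conclusion acquire a factor of $T$. But \eqref{9.18.3} has no such factor, and the constants $N,N^{*}$ depend only on $m$ and $\delta$; the pointwise argument at the maximizer of $V_\nu$ is exactly how the paper avoids this $T$-dependence. A secondary issue is that $W^{-}=(\Delta_r u)^{-}$ is nondifferentiable and the discrete product rules applied to $\zeta^{2}(W^{-})^{2}$ involve neighboring values where $W$ may have opposite sign, so your ``favorable'' pieces $a_k\zeta^{2}[(\delta_k W)^{2}+(\delta_{-k}W)^{2}]$ are not what the computation actually produces (you get $(\delta_k W^{-})^{2}$, which can be much smaller, and which moreover involves third-order quantities carrying an extra $h^{-1}$ relative to the $u_{kr}$ appearing in the target bound). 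The paper sidesteps both difficulties by never differentiating $V_\nu$ under $L_{\bar\alpha}$ and instead using $L_\nu$, the identity $L_\nu u=-V_\nu$, and Lemma~\ref{lemma 9.20.1}.
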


In the remaining part of this section no summation with respect to $r$ is performed. The number $r$
 is fixed at the beginning of the section. For simplicity of notation set $$
u_{rr}=\Delta_{r}u,\quad u_{r}=\delta_{r}u,\quad u_{kr} =-\delta_{-k}
\delta_{r}u. $$ Notice that in the above line the last notation when
$k=r$ is consistent with the first one.

Define
$$
u^{-}_{rr}= (u_{rr})^{-}.
$$
and for a constant $\nu\geq0$ introduce an operator
(recall that $r$ is fixed)
$$ L_{\nu}\phi=\zeta^{2}u_{rr}^{-}\Delta_{r}\phi -\nu\zeta
u_{r}\delta_{r}\phi. $$
Observe that
\begin{equation}
                                              \label{9.21.1}
L_{\nu}u=-\zeta^{2}(u_{rr}^{-})^{2}-\nu\zeta u_{r}^{2} =:-V_{\nu}.
\end{equation}
In the following lemma the fact that $u$ is a solution of
\eqref{9.19.9} is not used.

\begin{lemma}
                                        \label{lemma 9.20.1}
There exists $N=N(m,\delta)\geq1$ and $N^{*}=N^{*}(m,\delta)$ such that
if
\begin{equation}
                                              \label{12.23.4}
\nu\geq N^{*} \|\eta'\|+N (\|\eta''\|+\|\eta'\|^{2}) \end{equation} and
$N^{*}h\leq 1$, then  on $Q^{o}$ for any $\alpha=(a,b,c)\in A$ we have

$$ 2L_{\nu}[\partial_t+a_{k}\Delta_{k}+b_{k}\delta_{k}]u\geq
-[\partial_t+a_{k}\Delta_{k}+b_{k}\delta_{k}]V_{\nu} $$
\begin{equation}
                                            \label{9.20.3}
- (N\nu^{2}+N^{*}\nu)\bar{W}_{r}+(\nu/2)\zeta a_{k}u_{kr}^{2}.
\end{equation} Furthermore, $N^{*}=0$ if $b\equiv0$.
\end{lemma}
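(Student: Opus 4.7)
The strategy is a Bernstein-type calculation. Introduce the shorthand $M:=\partial_t+a_k\Delta_k+b_k\delta_k$. Since $\alpha=(a,b,c)$ is fixed, $M$ has constant coefficients and therefore commutes with each of $\partial_t$, $\Delta_r$, $\delta_r$. Consequently
$$
L_\nu Mu=\zeta^2 u_{rr}^- Mu_{rr}-\nu\zeta u_r Mu_r.
$$
Moreover, the pointwise identity $y\cdot y^-=-(y^-)^2$ with $y=u_{rr}$ gives the algebraic identity $L_\nu u=-V_\nu$, so $ML_\nu u=-MV_\nu$ trivially. The assertion \eqref{9.20.3} is therefore equivalent to the commutator-type lower bound
$$
2L_\nu Mu-ML_\nu u\geq -(N\nu^2+N^*\nu)\bar{W}_r+(\nu/2)\zeta a_k u_{kr}^2.
$$

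The plan is to expand $MV_\nu=M(\zeta^2(u_{rr}^-)^2)+\nu M(\zeta u_r^2)$ using the discrete Leibniz rules $\Delta_k(\phi\psi)=\phi\Delta_k\psi+\psi\Delta_k\phi+\delta_k\phi\,\delta_k\psi+\delta_{-k}\phi\,\delta_{-k}\psi$ and $\delta_k(\phi\psi)=\phi\delta_k\psi+\psi\delta_k\phi+h\,\delta_k\phi\,\delta_k\psi$. For the smooth piece $\zeta u_r^2$, a direct calculation gives $M(\zeta u_r^2)=2\zeta u_r Mu_r+\zeta a_k[(\delta_k u_r)^2+(\delta_{-k}u_r)^2]+R_1$, where $R_1$ collects every term containing at least one finite difference of $\zeta$. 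Using $a_k=a_{-k}$ and the identities $\delta_{-k}u_r=-u_{kr}$, $\delta_k u_r=-u_{-k,r}$ (which follow from $u_{kr}=-\delta_{-k}\delta_r u$), the quadratic bracket collapses to $2\zeta a_k u_{kr}^2$; this is the origin of the good term. For the non-smooth piece $(u_{rr}^-)^2$, I exploit that $y\mapsto (y^-)^2$ is convex with derivative $-2y^-$: pointwise $\partial_t(u_{rr}^-)^2=-2u_{rr}^-\partial_t u_{rr}$, and summing the one-variable convexity inequality over $\pm h\ell_k$-shifts yields $\Delta_k(u_{rr}^-)^2\geq -2u_{rr}^-\Delta_k u_{rr}$. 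Since $a_k\geq 0$, combining with the Leibniz expansion of $\zeta^2\cdot(u_{rr}^-)^2$ gives $M(\zeta^2(u_{rr}^-)^2)\geq -2\zeta^2 u_{rr}^- Mu_{rr}+R_2$, where $R_2$ contains the $\Delta_k,\delta_k$-of-$\zeta^2$ contributions together with a $b_k\zeta^2\delta_k(u_{rr}^-)^2$ piece that lacks convexity structure. Adding the two bounds,
$$
2L_\nu Mu-ML_\nu u\geq 2\nu\zeta a_k u_{kr}^2+\nu R_1+R_2.
$$

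The remaining task is to absorb $\nu R_1+R_2$ into the good quadratic at the price of an $(N\nu^2+N^*\nu)\bar{W}_r$ residue; this is what dictates the lower bound imposed on $\nu$. Writing $\zeta=\eta^2$ and Taylor-expanding the shift operator produces the weighted bounds $|\delta_k\eta^{2j}|\leq C\eta^{2j-1}\|\eta'\|+O(h)$ and $|\Delta_k\eta^{2j}|\leq C\eta^{2j-1}\|\eta''\|+C\eta^{2j-2}\|\eta'\|^2+O(h)$, so each finite difference of $\zeta$ is accompanied by a compensating power of $\eta$. Cross-terms of the form $\delta_k\zeta^{2j}\,\delta_k\psi$ are split by Cauchy--Schwarz into $\varepsilon\zeta(\delta_k\psi)^2+\varepsilon^{-1}\zeta^{-1}(\delta_k\zeta^{2j})^2$: the first piece absorbs into $\zeta a_k u_{kr}^2$ (choosing $\varepsilon$ small reduces the prefactor $2\nu$ to the advertised $\nu/2$), while the second yields $\|\eta'\|^2$-weighted contributions that are bounded either by $\bar{W}_r$ directly or, in the $(u_{rr}^-)^2$ case, absorbed using the fact that the good sum $\zeta a_k u_{kr}^2$ already contains the summand $\zeta a_r u_{rr}^2\geq \delta\zeta(u_{rr}^-)^2$. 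All these contributions fit into $N\nu^2\bar{W}_r$ under the hypothesis $\nu\geq N(\|\eta''\|+\|\eta'\|^2)$. The $b_k$-pieces of $R_2$ (those without convexity structure) are handled using Assumption \ref{assumption 9.18.1}(i), in particular $hb_k^-\leq a_k$ and $|b_k|\leq\delta^{-1}$, producing the $N^*\nu\|\eta'\|\bar{W}_r$ residue and vanishing identically when $b\equiv 0$; the side-condition $N^*h\leq 1$ controls the $O(h)$ remainders in the $\delta_k$-Leibniz rule. The principal technical obstacle is precisely the non-smoothness of $u_{rr}^-$: the Leibniz rule is unavailable for $\Delta_k(u_{rr}^-)^2$, forcing reliance on the one-sided convexity bound whose orientation fortunately matches $a_k\geq 0$, and there is no analogous bound for $b_k\delta_k(u_{rr}^-)^2$, which is why the $b$-contribution produces the separate $N^*$-dependent residue.
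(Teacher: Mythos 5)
The paper does not actually prove this lemma: the entire ``proof'' is the one-line remark that, modulo the identity $2L_{\nu}\partial_{t}u=-\partial_{t}V_{\nu}$ (which makes the $\partial_t$-parts of the two sides cancel exactly), the statement is Lemma 5.3 of \cite{Kr12.2}. Your Bernstein-type computation is therefore being compared against an argument the paper delegates elsewhere, and in broad outline it is the right one: you correctly reduce to bounding $2L_{\nu}Mu+MV_{\nu}$ from below, you correctly isolate the origin of the good term $2\nu\zeta a_k u_{kr}^2$ from $a_k\Delta_k(u_r^2)=2u_r a_k\Delta_k u_r+2a_k(\delta_k u_r)^2$, you correctly use the pointwise identity $\partial_t(u_{rr}^-)^2=-2u_{rr}^-\partial_t u_{rr}$ so the time-derivative contributions vanish, and you correctly identify the convexity of $y\mapsto(y^-)^2$ together with $a_k\ge0$ and $hb_k^-\le a_k$ as the structural tools.

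There are, however, two places where the sketch glosses over a genuine difficulty. First, your $R_1$ is declared to consist of ``every term containing at least one finite difference of $\zeta$,'' but the Leibniz rule $\delta_k(u_r^2)=2u_r\delta_k u_r+h(\delta_k u_r)^2$ also produces the $\zeta$-derivative-free piece $h\zeta b_k(\delta_k u_r)^2$, which must be bounded using $hb_k^-\le a_k$ against the good sum; that is not covered by the claimed decomposition and should be stated. Second, and more substantively, the handling of $b_k\zeta^2[\delta_k(u_{rr}^-)^2+2u_{rr}^-\delta_k u_{rr}]$ is subtler than ``producing the $N^*\nu\|\eta'\|\bar{W}_r$ residue.'' The Lipschitz-derivative estimate alone gives only $0\le \delta_k(u_{rr}^-)^2+2u_{rr}^-\delta_k u_{rr}\le h(\delta_k u_{rr})^2$, and $(\delta_k u_{rr})^2\sim h^{-2}(u_{-k,r}+u_{-k,-r})^2$ cannot be absorbed into $\nu\zeta a_k u_{kr}^2$ (one would need $\nu\gtrsim h^{-1}$, which the hypotheses do not give). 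The mechanism that actually works is to pair this piece, index by index, with the extra nonnegative surplus $2\zeta^2 u_{rr}^- a_k\Delta_k(u_{rr}^+)$ that comes out of the $a_k\Delta_k$-expansion when one writes $\Delta_k(u_{rr}^-)^2=2u_{rr}^-\Delta_k u_{rr}^-+(\delta_k u_{rr}^-)^2+(\delta_{-k}u_{rr}^-)^2$ and $\Delta_k u_{rr}^-=\Delta_k u_{rr}^+-\Delta_k u_{rr}$; the condition $hb_k^-\le a_k$ is then precisely what makes $a_k\Delta_k(u_{rr}^+)$ dominate the $b_k\delta_k(u_{rr}^+)$-type contribution. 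That pairing is the one genuinely non-obvious step, and it should be made explicit rather than attributed to a residue term.
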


Up to an obvious formula $2L_{\nu}\partial_{t}
u=-\partial_{t}V_{\nu}$
this lemma is identical to Lemma 5.3 of \cite{Kr12.2}.

\begin{proof}[Proof of Theorem \ref{theorem 9.18.1}]
Denote by $N_{0}$ and
$N^{*}_{0}$ the constants $N$ and $N^{*}$ in Lemma \ref{lemma 9.20.1}
and take and fix a  $\nu$ satisfying \eqref{12.23.4} (with $N_{0}$ and
$N^{*}_{0}$ in place of $N$ and $N^{*}$).

Notice that in $Q^{o}$
$$
|u_{rr}|=h^{-1}|u_{r}+u_{-r}|\leq 2h^{-1}\bar{W}_{r}^{1/2}, $$ which
shows that \eqref{9.18.3} holds if $h\geq\nu^{-1/2}$ or if
$N^{*}_{0}h\geq1$. Therefore below we
  assume that
\begin{equation}
                                      \label{12.23.1}
h\leq\nu^{-1/2},\quad N^{*}_{0}h\leq1. \end{equation}

Let $(t_{0},x_{0})\in \bar{Q} $
be a point such that
$$
V_{\nu}(t_{0},x_{0})=\sup_{Q }V_{\nu}.
$$
If $(t_{0},x_{0})\not\in \bar{Q}^{o}$, then, as is easy to see,
this point can be approximated by points
lying in $Q\setminus Q^{o}$, in which case
$$
\sup_{Q }V_{\nu}=\sup_{Q\setminus Q^{o} }V_{\nu}
$$
and \eqref{9.18.3} follows. Therefore, in the rest of the proof
we may assume that
$$
(t_{0},x_{0})\in \bar{Q}^{o}.
$$

We  may also assume that
\begin{equation}
                                           \label{7.20.1}
\zeta(x_{0})u_{rr}^{-}(t_{0},x_{0})
>\nu h u_{ r}(t_{0},x_{0}).
\end{equation}
Indeed, if the opposite inequality holds, then
in light of \eqref{12.23.1}
in $Q^{o}$
$$
\zeta^{2}[u_{rr}^{-}]^{2}
 \leq V_{\nu}(t_{0},x_{0}) \leq
\nu^{2} h^{2} u^{2}_{ r}(t_{0},x_{0})
 +\nu \bar{W}_{r} \leq 2\nu \bar{W}_{r}.
$$

Next, consider two cases: 1) $\partial_{t}V_{\nu}(t_{0},x_{0})
>0$, 2) $\partial_{t}V_{\nu}(t_{0},x_{0})
\leq0$. In the first case $(t_{0},x_{0})\not \in Q^{o}$
and there is no sequence $t_{n}\downarrow t_{0}$
such that $(t_{n},x_{0}) \in Q^{o}$. Hence
$(t_{0},x_{0}) \in \hat{Q}^{o} \setminus Q^o $ and in $Q^{o}$
$$
\zeta^{2}[u_{rr}^{-}]^{2}
\leq V_{\nu}(t_{0},x_{0})\leq
\sup _{ \hat{Q}^{o}\setminus Q^{o} }\zeta^{2}[
( \Delta_{r}u)^{-}]^{2}
+\nu \bar{W}_{r} ,
$$
so that \eqref{9.18.3} holds.

In the remaining case
\begin{equation}
                                             \label{7.21.1}
\partial_{t}V_{\nu}(t_{0},x_{0})
\leq0.
\end{equation}
To extract some consequences of \eqref{7.21.1}, first we
notice that if a function $\phi(t_{0},x)$ is such that
 $\phi(t_{0},x)\leq\phi( t_0 ,x_{0})$
for $x\in x_{0}+h\Lambda$, then
owing to \eqref{7.20.1} at $(t_{0},x_{0})$ we have
$$
h^{2}L_{\nu}\phi(t_0,x_{0})=\zeta [ \phi(x_{0}+h\ell_{r})(\zeta
u_{rr}^{-} -\nu h u_{r}) +  \phi(x_{0}-h\ell_{r})\zeta u_{rr}^{-}]
$$
$$
-\zeta[2\zeta u_{rr}^{-}-\nu h u_{r} ]\phi \leq \zeta [ (\zeta
u_{rr}^{-} -\nu h u_{r})\phi + \zeta u_{rr}^{-} \phi ]-\zeta[2\zeta
u_{rr}^{-}-\nu h u_{r} ]\phi,
$$
where the last expression is zero. Thus
$$
L_{\nu}\phi(t_0,x_{0})\leq0,
$$
which in the terminology from \cite{Kr2}
means that $L_{\nu}$ respects the maximum principle.

Furthermore, we can find an  $\bar{\alpha} =(\bar{a},\bar{b},\bar{c})\in A$
such that
$$
\partial_t u(t_0,x_0)+\bar{a}_{k}\Delta_{ k}u(t_0,x_{0}) +\bar{b}_{k}\delta_{ k}u(t_0,x_{0})
-\bar{c}u(t_0,x_{0})+f(\bar{\alpha},t_0,x_{0})
$$
$$
= \partial_t u(t_0,x_0)+\cP[u](t_0,x_{0})=0.
$$
Since
$\partial_t u+\cP[u]\leq 0$ in $Q$,  we have that
$$
\phi(t_{0},x):= \partial_t u(t_{0},x)+\bar{a}_{k}\Delta_{
k}u(t_{0},x) +\bar{b}_{k}\delta_{ k}u(t_{0},x) -
\bar{c} u(t_{0},x)+f(\bar{\alpha},t_{0},x
)\leq 0
$$
for   $x\in x_{0}+h\Lambda$. Hence,
$0\geq
2L_{\nu}\phi(t_0,x_{0})$,
which owing to \eqref{9.21.1}, \eqref{9.20.3},
 and \eqref{7.21.1} yields
$$
0\leq[\partial_t+ \bar{a}_{k}\Delta_{k}+\bar{b}_{k}\delta_{k}- 2\bar{c}
]V_{\nu}(t_0,x_{0})- (\nu/2)\zeta \bar{a}_{k}
u_{kr}^{2}(t_0,x_{0})
$$
$$
+(N\nu^{2}+N^{*}\nu)\bar{W}_{r} - 2L_{\nu}f(\bar{\alpha},\cdot
)(t_0,x_{0} )
$$
$$
\leq[ \bar{a}_{k}\Delta_{k}+\bar{b}_{k}\delta_{k}- 2\bar{c}
]V_{\nu}(t_0,x_{0})- (\nu/2)\zeta \bar{a}_{k}u_{kr}^{2}(t_0,x_{0})
$$
$$
+(N\nu^{2}+N^{*}\nu)\bar{W}_{r} - 2L_{\nu}f(\bar{\alpha},\cdot
)(t_0,x_{0} ).
$$
Here the last term is dominated by
$$
 K_{2}\zeta^{2}u_{rr}^{-}(t_0,x_{0})+\nu |u_{r}(t_0,x_{0})| K_{1}
$$
$$ \leq  N\nu^{-1}K_{2}^{2}+(\nu/4)\zeta
\bar{a}_{k}u_{kr}^{2} (t_0,x_{0}) +
K_{1}^{2}+\nu^{2}\bar{W}_{r}.
$$
Furthermore, since $V_{\nu}(t_{0},x)\geq0$ attains its maximum at
$(t_{0},x_{0})$,
$$
[\bar{a}_{k}\Delta_{k}+\bar{b}_{k}\delta_{k}- 2\bar{c}
]V_{\nu}(t_0,x_{0})\leq0.
$$

We now conclude that
$$
(\nu/4)\zeta \bar{a}_{k}
u_{kr}^{2} (t_0,x_{0})\leq
(N\nu^{2}+N^{*}\nu)\bar{W}_{r} +N\nu^{-1}K_{2}^{2} + K_{1}^{2},
$$
which implies that in $Q^{o}$
$$
\zeta^{2}(u_{rr}^{-})^{2}\leq
V_{\nu}(t_0,x_{0})\leq N\zeta \bar{a}_{k}
u_{kr}^{2} (t_{0},x_{0})+\nu\bar{W}_{r}
$$
$$
\leq (N\nu+N^{*})\bar{W}_{r}+N\nu^{-2}K_{2}^{2} + \nu^{-1}K_{1}^{2}.
$$
Thus, estimate \eqref{9.18.3} holds on $Q^{o}$
in all cases and this
proves the theorem.
\end{proof}

\mysection{A model cut-off equation}

                             \label{section 10.18.2}

We will work in the setting of
 Section \ref{section 9.22.1}. However now  $h>0$ is not fixed.
Take a function $\cH(u,t,x)$, where $(t,x)\in\bR^{d+1}$,
$u=(u',u'')\in\bR^{1+m'+2m}$.

\begin{assumption}
                                   \label{assumption 9.23.01}
(i) The function $\cH$ is Lipschitz continuous in $u$ for every $(t,x)$, and
at all points of differentiability of $\cH$ with respect to $u$  we
have
$$ \delta\leq \cH_{u''_{k}}\leq\delta^{-1},\quad
  k=\pm1,...,\pm m,
\quad \delta\leq- \cH_{u' _{0}}\leq\delta^{-1}, $$ $$
|\cH_{u'_{k}}|\leq\delta^{-1},\quad k= 1,..., m'; $$

(ii) The number $\bar{\cH}=\sup_{t,x}|\cH(0,0,t,x)|$ is finite;

(iii) The function $\cH$ is locally Lipschitz continuous in
$(t,x)$ for every $u$ and there exists a constant $N'$ such that at all points of differentiability of $\cH$ with respect to $(t,x)$  we have $$
|\partial_{t}\cH (u,t,x)|+
|\cH_{x_{i}}(u,t,x)|\leq N'(1+|u|),\quad\forall\, i; $$

(iv) We have ${\rm Span}\,(\ell_{1},...,
\ell_{m})=\bR^{d}$.

\end{assumption}

Define
\begin{align*}
&\cP (u',u'',t,x)=\cP (u' ,u'')= 2\delta
^{-1}\sum_{k}(u''_{k})^{+} -(\delta /2)\sum_{k}(u''_{k})^{-}\nonumber\\
&\quad +2\delta ^{-1}\sum_{k\geq1}|u'_{k}| -(\delta/2) (u'_{0})^{+}+2\delta
^{-1} (u'_{0})^{-}\nonumber\\
&=\max_{\substack{\delta /2\leq a_{k}\leq 2/\delta \\|k|=1,...,m} }
\max_{\substack{ |b_{k}|\leq 2/\delta \\|k|=1,...,m'} } \max_{\delta/2
\leq c\leq 2/\delta }\big[\sum_{|i|=1}^{m} a_{i} u''_{i}+\sum_{ i
=1}^{m'}b_{i}u'_{i} -cu'_{0}\big].
\end{align*}

For  functions $v(t,x)$   introduce $$ H[v](t,x)=\cH(v(t,x),\partial
v(t,x),\partial^{2}v(t,x),t,x) $$ whenever and wherever it makes sense, where
$$
\partial v=(  v_{(\ell_{1})},...,v_{(\ell_{ m'})}),
$$
$$
\partial^{2} v=(
v_{(\ell_{-m})(\ell_{-m})},...,v_{(\ell_{-1})(\ell_{-1})},
v_{(\ell_{1})(\ell_{1})},...,v_{(\ell_{ m})(\ell_{ m})}), $$ and
$v_{(\ell)}=\ell_{i}v_{x_{i}}$, $v_{(\ell)
(\ell)}=\ell_{i}\ell_{j}v_{x_{i}x_{j}}$. Similarly,
$$
P[u](t,x)=\cP(u(t,x),\partial u(t,x),\partial^{2}u(t,x) ).
$$

Let $T\in(0,\infty)$, $\Omega$ be a bounded $ C^{2} $ domain in $\bR^{d}$, $g\in
C^{1,2}(\bar{\Omega}_T)$, and let $K\geq0$ be a finite number.
\begin{theorem}
                                       \label{theorem 9.14.1}
In addition to Assumption \ref{assumption 9.23.01} suppose
 that $\pm e_{i},\pm (e_{i}+ e_{j}),
  e_{i} -e_{j}\in \Lambda$, $i,j=1,..,d$,
 were  $e_{1},...,e_{d}$ is the standard orthonormal basis
in $\bR^{d}$  and  assume that all vectors in $\Lambda$ have
rational coordinates. Then there exists a unique $v\in
\cC^{1,1}(\bar{\Omega}_T)\cap
W^{1,2}_{\infty,\text{loc}}(\Omega_T)$ such that
$v=g$ on $\partial' \Omega_T$ and
\begin{equation}
                                              \label{9.14.2}
\partial_t v+ H _{K}[v]=0
\end{equation}
(a.e.) in $\Omega_T$, where
$$
 H _{K}[v]=
\max( H[v],P [v]-K).
$$
Furthermore,
\begin{equation}
                                             \label{1.14.3}
|v| ,  |D_{i}v| ,  \rho|D_{ij} v|,|\partial_t v|\leq N(\bar{\cH}+K+\|g\|_{C^{1,2}(\Omega_T)})
\end{equation}
in $\Omega_T$ (a.e.) for all $i,j$, where
 $N$ is a  constant depending only
on $\Omega$,   $\{\ell_{1},...,\ell_{m}\}$, $d$, and $\delta$ (but not on
$N'$).

\end{theorem}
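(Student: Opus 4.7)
The plan is to approximate \eqref{9.14.2} by its spatial finite-difference analogue on the scaled lattice $h\Lambda_\infty$, derive estimates uniform in the mesh size $h$ matching \eqref{1.14.3}, and pass to the limit $h\downarrow 0$. For each rational mesh $h$, look for $v^h\colon[0,T]\times h\Lambda_\infty\to\bR$ solving the system of ODEs
\begin{equation*}
\partial_t v^h+\max\bigl(\cH(v^h,\delta v^h,\delta^{2} v^h,t,x),\,\cP(v^h,\delta v^h,\delta^{2}v^h)-K\bigr)=0
\end{equation*}
at lattice points inside $\Omega$, with terminal-boundary data induced by $g$ on the lattice points outside. The right-hand side is Lipschitz in the discrete unknowns, so the system admits a unique solution on $[0,T]$. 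Both this equation and the equations obtained by differencing it in $x$ or differentiating it in $t$ fit the framework of Section \ref{section 9.22.1}, so Lemma \ref{lemma 7.21.1} and Theorem \ref{theorem 9.18.1} apply to them.

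The low-order estimates come first. Using Hadamard's formula, the equation for $v^h$ can be rewritten as a discrete linear parabolic equation with $\cS_\delta$-type coefficients and bounded free term; Lemma \ref{lemma 7.21.1} combined with a standard global barrier built from $\Omega\in C^2$ yields $|v^h|\le N(\bar\cH+K+\|g\|_{C^{1,2}})$ uniformly in $h$. Applying $\delta_k$ to the equation and using Assumption \ref{assumption 9.23.01}(iii) to bound the resulting inhomogeneity independently of $h$, a second application of Lemma \ref{lemma 7.21.1} provides a uniform bound on $\delta_k v^h$. Differentiating the ODE system in $t$ and applying Lemma \ref{lemma 7.21.1} once more gives the uniform bound on $\partial_t v^h$, with parabolic boundary data controlled by $g\in C^{1,2}$.

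The main obstacle, and the genuinely new parabolic step compared with \cite{Kr12.2}, is the two-sided bound on $\rho\,\Delta_k v^h$. The \emph{lower} bound is supplied by Theorem \ref{theorem 9.18.1}: choosing $\eta$ so that $\zeta=\eta^{2}$ is comparable to $\rho$ inside $\Omega$ and substituting the previously derived bounds for $|v^h|$, $|\delta_k v^h|$, $\bar\cH$, and $K$, the estimate \eqref{9.18.3} gives $\zeta^2[(\Delta_r v^h)^-]^2\le N$, hence $\rho(\Delta_k v^h)^-\le N$ uniformly in $h$ and in $k$. For the \emph{upper} bound I use the equation together with the explicit form of $\cP$. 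Since $\partial_t v^h$ is bounded, so is $\max\bigl(\cH[v^h],\cP[v^h]-K\bigr)$; in particular $\cP[v^h]$ is bounded above. Since
\begin{equation*}
\cP(u)\ge 2\delta^{-1}\sum_{k}(u''_{k})^{+}-(\delta/2)\sum_{k}(u''_{k})^{-}+\text{(bounded first-order terms)},
\end{equation*}
the just-proven lower bound on $\rho(\Delta_k v^h)^{-}$ combined with the bound on $\cP[v^h]$ yields $\rho(\Delta_k v^h)^{+}\le N$, whence $\rho\,|\Delta_k v^h|\le N$ uniformly in $h$. The subtlety of this bootstrap, and the reason it is deferred, is that it crucially requires the uniform bound on $\partial_t v^h$; in the elliptic analogue the analogous step was avoided because the time derivative was absent.

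With the uniform bounds in place, extend each $v^h$ to a continuous function on $\bar\Omega_T$ by a standard interpolation. The resulting family is equi-Lipschitz in $t$ and in each direction $\ell_k$, and by Assumption \ref{assumption 9.23.01}(iv) also in each Cartesian direction, so Arzelà–Ascoli extracts a subsequence converging uniformly on compacts to a function $v\in\cC^{1,1}(\bar\Omega_T)\cap W^{1,2}_{\infty,\mathrm{loc}}(\Omega_T)$ satisfying \eqref{1.14.3}. Stability results for fully nonlinear parabolic equations (adapted from Theorem 3.5.9 of \cite{Kr85} as in Remark \ref{remark 2.28.1}, using the interior $W^{1,2}_{\infty}$-bounds to pass to the limit in both $H[v^h]$ and $P[v^h]$) identify $v$ with an a.e.\ solution of \eqref{9.14.2}. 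Uniqueness in the stated class follows from the parabolic Alexandrov maximum principle applied to the linearization of $H_K$, exactly as in the proof of Lemma \ref{lemma 10.4.2}.
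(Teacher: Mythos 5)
Your overall architecture matches the paper's: discretize only in $x$, derive the zero, first, and time-derivative bounds via the discrete maximum principle (Lemma \ref{lemma 7.21.1}), get the lower bound on $\Delta_k v_h$ from Bernstein (Theorem \ref{theorem 9.18.1}), and recover the upper bound from the equation and the time-derivative bound. But the step you use to produce the first-order and time-derivative estimates is wrong, and the defect propagates through the whole argument.

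Concretely, you propose to ``apply $\delta_k$ to the equation and use Assumption \ref{assumption 9.23.01}(iii) to bound the resulting inhomogeneity independently of $h$,'' and similarly to ``differentiate the ODE system in $t$.'' Differencing (or $t$-differentiating) $\cH_K[v_h](t,x)$ produces, in addition to the linearization acting on $\delta_k v_h$ (resp.\ $\partial_t v_h$), a lower-order term coming from the $(t,x)$-dependence of $\cH$. Assumption \ref{assumption 9.23.01}(iii) bounds this by $N'(1+|v_h|+|\delta_h v_h|+|\delta_h^2 v_h|)$. This is \emph{not} a bounded free term: it contains the second differences $\delta_h^2 v_h$, which are precisely what you are still trying to control and which a priori blow up as $h\downarrow 0$. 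So Lemma \ref{lemma 7.21.1} cannot be applied with a usable right-hand side, and the argument does not close. Moreover, even if the inhomogeneity were controlled, the resulting constant would depend on $N'$, which the statement of Theorem \ref{theorem 9.14.1} explicitly forbids. (In the paper, Lemmas \ref{lemma 9.14.2} and \ref{lemma 9.16.1} are stated under Assumptions \ref{assumption 9.23.01}(i),(ii),(iv) only; (iii) enters later only in Corollary \ref{lem11.24}, where the constant $M$ \emph{is} allowed to depend on $N'$.)

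The mechanism that actually works, and that you are missing, is the cutoff. One defines $Q^o$ as the set where the first differences (resp.\ the sum of second differences) exceed $N(\bar\cH+K)$; by Lemma \ref{lemma 9.14.1}, on $Q^o$ one has $\cH[v_h]\le\cP[v_h]-K$, so the equation reduces to $\partial_t v_h+\cP_h[v_h]=K$. Since $\cP$ has no $(t,x)$-dependence, differencing or $t$-differentiating this reduced equation produces \emph{no} inhomogeneity at all, and the maximum principle then controls $\delta_{h,r}v_h$ (resp.\ $\partial_t v_h$) on $Q^o$ by its values on $Q\setminus Q^o$, where it is controlled by construction of $Q^o$ and by the barrier estimate \eqref{1.14.1}. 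This is the whole reason the cutoff $\max(H,P-K)$ is there. Your lower-bound step via Theorem \ref{theorem 9.18.1} and your deduction of the upper bound from $\partial_t v_h+\cP_h[v_h]\le K$ are fine in spirit (the paper also optimizes over a family of localizing cutoffs $\eta_\mu$ at scale $\mu\sim\rho$ rather than using a single $\eta$ comparable to $\rho^{1/2}$ directly, which is needed to keep $\|\eta'\|,\|\eta''\|$ finite), but both rely on the first-order and time-derivative bounds whose derivation is broken in your version.
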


To prove the theorem, we are going to use finite-difference
approximations of the operators $H[v]$ and $P[v]$.

For $h>0$ introduce
$$
P_{h}[v](t,x)=\cP (v(t,x),\delta_{h}v(t,x),
\delta_{h}^{2}v(t,x)),
$$
where
$$
\delta_{h} v=( \delta_{h,1}v ,..., \delta_{h,m'}v ), $$ $$
\delta^{2}_{h} v=(\Delta_{h,-m}v ,..., \Delta_{h,-1}v
 ,\Delta_{h,1}v
,..., \Delta_{h,m}v).
$$
Similarly we introduce
$$
H_{h}[v](t,x)
=H(v(t,x),\delta_{h}v(t,x),\delta_{h}^{2}v(t,x))
$$
and
$H_{K,h}[v]=\max(H_h[v],P_h[v]-K)$.

Here is Lemma 6.2 of \cite{Kr12.2}.
Its proof is similar to that  of Lemma \ref{lemma 9.29.2}.
\begin{lemma}
                                          \label{lemma 9.14.1}
Under Assumptions \ref{assumption 9.23.01} (i)  and
(ii),
\begin{equation*}
\cH \leq  \cP -(\delta /4)
 \big(\sum_{k}|u''_{k}| +\sum_{k}|u'_{k}|\big)+
\bar{\cH}.
\end{equation*}
\end{lemma}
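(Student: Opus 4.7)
My plan is to mirror the proof of Lemma \ref{lemma 9.29.2}, since the structure of the assumptions on $\cH$ (bounds on one-sided partials, a distinguished sign for $\cH_{u'_0}$) parallels those on $H$, with the only real change being that the second-order bounds are now written componentwise in the finite-difference variables $u''_k$ rather than in matrix form.

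The starting point is Hadamard's formula: for a.e.\ $u = (u',u'')$, the Lipschitz function $\cH$ is differentiable at $\theta u$ for a.e.\ $\theta\in[0,1]$, so
$$
\cH(u,t,x) - \cH(0,0,t,x) = \sum_{|k|=1}^{m} u''_{k}\int_{0}^{1}\cH_{u''_{k}}(\theta u,t,x)\,d\theta + \sum_{k=1}^{m'} u'_{k}\int_{0}^{1}\cH_{u'_{k}}(\theta u,t,x)\,d\theta + u'_{0}\int_{0}^{1}\cH_{u'_{0}}(\theta u,t,x)\,d\theta.
$$
Then I would use the pointwise bounds from Assumption \ref{assumption 9.23.01}(i) to estimate each term. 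Namely, $\cH_{u''_k}\in[\delta,\delta^{-1}]$ gives $u''_k\cH_{u''_k}\leq \delta^{-1}(u''_k)^+ - \delta (u''_k)^-$; the bound $|\cH_{u'_k}|\leq\delta^{-1}$ gives $u'_k\cH_{u'_k}\leq \delta^{-1}|u'_k|$; and $\cH_{u'_0}\in[-\delta^{-1},-\delta]$ gives $u'_0\cH_{u'_0}\leq -\delta(u'_0)^+ + \delta^{-1}(u'_0)^-$. Integrating in $\theta$ preserves these one-sided bounds, so
$$
\cH(u,t,x) - \cH(0,0,t,x) \leq \delta^{-1}\sum_{k}(u''_{k})^{+} - \delta\sum_{k}(u''_{k})^{-} + \delta^{-1}\sum_{k\geq 1}|u'_{k}| -\delta(u'_0)^+ + \delta^{-1}(u'_0)^-.
$$

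The right-hand side is exactly the expression for $\cP(u)$ but with the coefficients $2\delta^{-1}$ and $\delta/2$ replaced by $\delta^{-1}$ and $\delta$. Subtracting, the difference $\cP(u)$ minus this upper bound equals
$$
\delta^{-1}\sum_{k}(u''_{k})^{+} + (\delta/2)\sum_{k}(u''_{k})^{-} + \delta^{-1}\sum_{k\geq 1}|u'_{k}| +(\delta/2)(u'_0)^+ + \delta^{-1}(u'_0)^-,
$$
and since $\delta\in(0,1]$ each coefficient is at least $\delta/4$, so this is bounded below by $(\delta/4)\sum_{k}|u''_{k}| + (\delta/4)\sum_{k}|u'_{k}|$ after combining positive and negative parts. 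Finally $\cH(0,0,t,x)\leq\bar{\cH}$ by definition, which yields the claimed inequality.

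The ``obstacle'' is purely bookkeeping: one must be careful that, although $\cH$ may fail to be differentiable on a measure-zero set in $u$, the Lipschitz continuity ensures Hadamard's formula applies for a.e.\ $u$ along the segment $\theta u$, and the pointwise one-sided gradient bounds transfer cleanly to integrated one-sided bounds. No other step requires real work.
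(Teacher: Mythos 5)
Your proposal is correct and matches the paper's approach: the paper explicitly states that the proof of this lemma is similar to that of Lemma \ref{lemma 9.29.2}, and you have faithfully adapted that argument (Hadamard's formula plus the one-sided bounds on the partial derivatives from Assumption \ref{assumption 9.23.01}(i), then comparison with the larger coefficients defining $\cP$). The only simplification here relative to Lemma \ref{lemma 9.29.2} is that the eigenvalue/trace bookkeeping disappears because $u''$ is already a vector of scalar components rather than a symmetric matrix, which you correctly exploit.
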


Introduce $B$ as the smallest closed
 ball containing $\Lambda$ and set
$$ \Omega^{h}=\{x\in \Omega:x+hB\subset \Omega\} =\{x:\rho(x)\geq
\lambda h\}, $$ where $\lambda$ is the radius of $B$.

Consider the equation
\begin{equation}
                                              \label{2.25.3}
\partial_t v+H_{K,h}[v]=0\quad \text{in}\quad
[0,T]\times\Omega^{h}   \end{equation}
 with terminal-boundary condition
\begin{equation}
                                              \label{2.25.4}
v=g\quad\text{on}\quad \Big(\{T\}\times
\Omega^h\Big)\cup\Big((0,T)\times(\Omega\setminus
\Omega^{h})\Big) . \end{equation}

In view of Picard's method  of  successive  approximations, for any $h>0$ there
exists a unique bounded solution $v=v_{ h}$ of
\eqref{2.25.3}--\eqref{2.25.4}.
Furthermore, $\partial_{t}v_{h}$ is bounded and continuous.  By the way, we
do not include $K$ in the notation $v_{ h}$ since $K$ is a fixed
number.

Below by $h_{0}$ and $N$ with occasional indices we denote various
(finite) constants depending only on $\Omega$,
$\{\ell_{1},...,\ell_{m}\}$,
$d$, and $\delta$.

In the following   lemma  the additional assumption of Theorem
\ref{theorem 9.14.1} concerning the $e_{i}$'s is
not used.

\begin{lemma}
                                         \label{lemma 9.14.2}
Suppose that all vectors in $\Lambda$ have
rational coordinates and that
 Assumptions \ref{assumption 9.23.01} (i), (ii), and (iv)
are satisfied. Then there   are
constants  $h_{0}>0$  and $N$
 such that for all $h\in(0,h_{0}]$
and $|r|=1,...,m$
\begin{equation}
                                              \label{1.14.1}
|v_{ h}-g|\leq N(\bar{\cH}+K+\|g\|_{C^{1,2}(\Omega_T)})\rho,
\end{equation}
\begin{equation}
                                              \label{1.14.2}
  |\delta_{h,r}v_{ h}|
\leq N(\bar{\cH}+K+\|g\|_{C^{1,2}(\Omega_T)}),
\end{equation}
\begin{equation}
                                              \label{eq10.12}
  |\partial_t v_{ h}|
\leq N(\bar{\cH}+K+\|g\|_{C^{1,2}(\Omega_T)})
\end{equation}
on $\Omega_T$.
\end{lemma}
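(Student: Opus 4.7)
The plan is to establish the three estimates in sequence by combining the discrete maximum principle (Lemma \ref{lemma 7.21.1}) with appropriate barrier constructions, exactly as in the elliptic template of \cite{Kr12.2}, making modifications to handle the terminal--boundary data. Throughout, $N$ denotes a constant that may change from line to line but depends only on the listed quantities.

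First I would prove the boundary Lipschitz estimate \eqref{1.14.1}. Since $\partial\Omega$ is $C^{2}$, there is a function $\Psi\in C^{2}(\bar\Omega)$ with $\Psi=0$ on $\partial\Omega$ and $c_{1}\rho\leq\Psi\leq c_{2}\rho$ (for instance, $\Psi=1-e^{-\lambda\rho}$ extended smoothly, with $\lambda$ large depending on $\Omega$ and $\delta$), chosen so that, uniformly on $\Omega^{h}$ for all small enough $h$,
\[
\cP(\Psi,\delta_{h}\Psi,\delta_{h}^{2}\Psi)\leq -c_{3}<0.
\]
This holds because $\delta_{h}^{2}\Psi\to D^{2}\Psi$ uniformly on $\bar\Omega$ as $h\to 0$, so the discrete Pucci expression $P_{h}[\Psi]$ is controlled by the classical one. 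Setting $G=g\pm M\Psi$ with $M=N(\bar{\cH}+K+\|g\|_{C^{1,2}(\Omega_{T})})$ and $N$ sufficiently large, Lemma \ref{lemma 9.14.1} together with Assumption \ref{assumption 9.23.01}(i) implies
\[
\partial_{t}(g+M\Psi)+H_{K,h}[g+M\Psi]\leq 0,\qquad \partial_{t}(g-M\Psi)+H_{K,h}[g-M\Psi]\geq 0,
\]
on $[0,T)\times\Omega^{h}$; here we use that the $P_{h}$-term in $H_{K,h}$ dominates and absorbs the bounded remainders via $P_{h}[\Psi]\leq -c_{3}$. Since $g\pm M\Psi$ bracket $v_{h}=g$ on the parabolic boundary, applying Hadamard's formula to $H_{K,h}[v_{h}]-H_{K,h}[G]$ produces a discrete linear equation for $w=v_{h}-G$ whose coefficients meet the hypotheses of Lemma \ref{lemma 7.21.1} (the structural condition $hb_{k}^{-}\leq a_{k}$ is inherited by taking $h_{0}$ small). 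The maximum principle then gives $|v_{h}-g|\leq M\Psi\leq Nc_{2}M\rho$, i.e., \eqref{1.14.1}.

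Next I would deduce \eqref{1.14.2}. For any $x\in\Omega$ with $x+h\ell_{r}\in\Omega$, estimate \eqref{1.14.1} and the $C^{1,2}$-regularity of $g$ give
\[
|v_{h}(t,x+h\ell_{r})-v_{h}(t,x)|\leq |g(t,x+h\ell_{r})-g(t,x)|+N(\bar{\cH}+K+\|g\|_{C^{1,2}})(\rho(x+h\ell_{r})+\rho(x)).
\]
In the boundary layer where $\rho(x)\leq Ch$ (and in particular outside $\Omega^{h}$), the right-hand side is $O(h)$, so $|\delta_{h,r}v_{h}|\leq N(\bar{\cH}+K+\|g\|_{C^{1,2}})$. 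To extend into the interior, I would write a discrete linear equation for $\delta_{h,r}v_{h}$ by subtracting the equations satisfied by $v_{h}(t,\cdot+h\ell_{r})$ and $v_{h}(t,\cdot)$ and applying Hadamard's formula inside $H_{K,h}$; this produces coefficients $(a,b,c)$ of the type required by Lemma \ref{lemma 7.21.1} and a forcing term bounded in terms of the $x$-Lipschitz constant of $\cH$ times $(1+|v_{h}|+|\delta_{h}v_{h}|+|\delta_{h}^{2}v_{h}|)$. An a priori application of Lemma \ref{lemma 7.21.1} on the ``non-cylindrical'' domain of pairs $(t,x)$ with both $x$ and $x+h\ell_{r}$ in $\Omega^{h}$, with boundary values controlled by the preceding layer estimate, yields \eqref{1.14.2}.

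Finally, for \eqref{eq10.12}, at the parabolic boundary the time derivative is already controlled: at $t=T$ on $\Omega^{h}$, $\partial_{t}v_{h}(T^{-},x)=-H_{K,h}[g](T,x)$, whose absolute value is bounded by $N(\bar{\cH}+K+\|g\|_{C^{1,2}})$ thanks to Assumption \ref{assumption 9.23.01}(i) and the $C^{1,2}$-norm of $g$; on $(0,T)\times(\Omega\setminus\Omega^{h})$, $v_{h}=g$ forces $\partial_{t}v_{h}=\partial_{t}g$. For interior propagation, I would form the difference quotient $w_{s}(t,x)=s^{-1}[v_{h}(t+s,x)-v_{h}(t,x)]$ for small $s>0$, subtract the two equations and apply Hadamard's formula to obtain a discrete linear equation for $w_{s}$ satisfying the structural assumptions of Lemma \ref{lemma 7.21.1}, with bounded right-hand side. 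The maximum principle bounds $w_{s}$ by its parabolic boundary values plus a constant, and letting $s\downarrow 0$ gives \eqref{eq10.12}.

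\textbf{Main obstacle.} The most delicate step is the barrier construction in the first paragraph: one must check that for all $h\in(0,h_{0}]$ the discrete Pucci expression $P_{h}[\Psi]$ remains strictly negative on $\Omega^{h}$, uniformly down to the discrete boundary, and that the Hadamard linearization of $H_{K,h}[v_{h}]-H_{K,h}[g\pm M\Psi]$ produces coefficients satisfying the sign condition $hb_{k}^{-}\leq a_{k}$ needed to invoke Lemma \ref{lemma 7.21.1}. Both require carefully chosen $h_{0}$ depending on $\delta$, $\Omega$, and the geometry of $\Lambda$.
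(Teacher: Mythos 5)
Your barrier argument for \eqref{1.14.1} is essentially the paper's (the paper uses the barrier from Lemma 8.8 of \cite{Kr11} and the same Hadamard linearization of $\cH_{K}$), so that part is fine. The serious gaps are in your treatment of \eqref{1.14.2} and \eqref{eq10.12}.

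For \eqref{1.14.2} you propose to take the $\delta_{h,r}$-difference of the full equation $\partial_{t}v_{h}+H_{K,h}[v_{h}]=0$ and apply Hadamard's formula inside $H_{K,h}$, accepting ``a forcing term bounded in terms of the $x$-Lipschitz constant of $\cH$ times $(1+|v_{h}|+|\delta_{h}v_{h}|+|\delta_{h}^{2}v_{h}|)$.'' There are two distinct problems with this. First, the $x$-Lipschitz constant of $\cH$ is the $N'$ from Assumption \ref{assumption 9.23.01}(iii); but the lemma is stated under Assumptions (i), (ii), (iv) only, and Theorem \ref{theorem 9.14.1} explicitly requires the constant to be independent of $N'$. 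Your forcing term re-introduces $N'$ and therefore proves a strictly weaker statement. Second, the forcing term involves $|\delta_{h}^{2}v_{h}|$, which at this stage is completely uncontrolled (bounding it is the content of the \emph{next} lemma, Lemma \ref{lemma 9.16.1}); this is a circularity, since those second differences scale like $h^{-1}$ a priori. The paper circumvents both problems at once: it restricts to the set $Q^{o}$ where $(\delta/4)|\delta_{h,r}v_{h}|>\bar{\cH}+K$, on which Lemma \ref{lemma 9.14.1} forces the equation to collapse to the \emph{pure} $\cP$-equation $\partial_{t}v_{h}+P_{h}[v_{h}]=K$. Since $\cP$ has no $(t,x)$-dependence at all, taking $\delta_{h,r}$ of this reduced equation produces a clean discrete linear equation for $\delta_{h,r}v_{h}$ with \emph{no} forcing term. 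Lemma \ref{lemma 7.21.1} on the non-cylindrical set $Q^{o}$ then bounds $\delta_{h,r}v_{h}$ in $Q^{o}$ by its values on $Q\setminus Q^{o}$, where the bound holds trivially by the definition of $Q^{o}$ or by \eqref{1.14.1}.

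The same defect appears in your plan for \eqref{eq10.12}: subtracting the full equation at $t$ and $t+s$ and Hadamard-linearizing $\cH_{K}$ in $u$ produces a residual $(\cH(u,t+s,x)-\cH(u,t,x))/s$, which again is bounded only by $N'(1+|u|)$ and hence is unusable. The paper performs the same time-differencing, but \emph{after} first reducing to the $\cP$-equation \eqref{9.16.7} on the set $Q^{o}$ where $(\delta/4)\sum_{k}|\Delta_{h,k}v_{h}|>\bar{\cH}+K$; since $\cP$ is autonomous in $(t,x)$, the resulting linear equation for $\partial_{t}v_{h}$ is homogeneous, and outside $Q^{o}$ estimate \eqref{eq10.12} follows from \eqref{1.14.1}, \eqref{1.14.2} and the equation itself. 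In short, the missing idea in your proposal is the reduction to the pure Pucci-type equation on the ``bad set,'' which is precisely what removes all lower-order inhomogeneities and all $N'$-dependence before the maximum principle is invoked.
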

\begin{proof}
Introduce
$$
\cH_{K}=\max(\cH,\cP -K).
$$
As is easy to see, $\cH_{K}$
satisfies Assumption \ref{assumption 9.23.01}
 with $\delta/2$ in place of $\delta$. Therefore, by Hadamard's
formula
(cf. our comments about \eqref{7.25.1})
there exist functions $a_{k},b_{k}$, $k=\pm1,...,\pm m$, and $c$
such that
\begin{equation}
                                             \label{10.18.4}
\delta/2\leq a_{k}\leq2\delta^{-1},\quad|b_{k}|\leq2\delta^{-1}, \quad
\delta/2\leq c\leq2\delta^{-1}
\end{equation}
and in $(0,T)\times \Omega^{h}$ we
have
$$
-\cH_{K}[0]=\partial_t v_h+H_{K,h}[v_{h}]-\cH_{K}[0]=\partial_t v_h+
a_{k}\Delta_{h,k}v_{h}+b_{k}\delta_{h,k}v_{h}- cv_{h}
$$
$$
=\partial_t (v_h-g)+
a_{k}\Delta_{h,k}(v_{h}-g)+b_{k}\delta_{h,k}(v_{h}-g)- c(v_{h}-g)+f,
$$
where
$$
f= \partial_t g+a_{k}\Delta_{h,k}g+b_{k}\delta_{h,k}g- cg.
$$
After that
\eqref{1.14.1} is proved by using the barrier function $\Phi$ from
Lemma   8.8 of \cite{Kr11}  and the comparison principle (see, for instance, Section 5 of \cite{DK}).
In particular, \eqref{1.14.1} implies that
\begin{equation}
                                           \label{9.16.3}
|v_{ h}-g|\leq N_{1}(\bar{\cH}+K +\|g\|_{C^{1,2}(\Omega_T)})h \quad
\text{on}\quad (0,T)\times\big(\Omega\setminus\Omega^{3h}\big).
\end{equation}

Clearly, the remaining assertion of the lemma would
follow if we can prove
that \eqref{1.14.2} and \eqref{eq10.12}
 hold  on $\Omega_{T}
\cap[(0,T)\times(y+\Lambda_{\infty}^{h})]$ for
any
$y\in\bR^{d}$ with a constant $N$ independent of $h$  and $y$. Without
losing generality we concentrate on $y=0$ and observe that
the number of points in $\Omega^{2h}\cap
\Lambda_{\infty}^{h}$ is finite since the $\ell_{k}$'s
have rational coordinates.

To prove \eqref{1.14.2}, fix an $r$ and define
$$
Q^{o} =\{(t,x)\in
  (0,T)\times[\Omega^{2h}\cap
\Lambda_{\infty}^{h}]:(\delta/4)|\delta_{h,r}v_{h}|> \bar{\cH}+K\}. $$
If $Q^{o}=\emptyset$, then $(\delta/4)|\delta_{h,r}v_{h}| \leq
\bar{\cH}+K$ in $(0,T)\times[\Omega^{2h}\cap
\Lambda_{\infty}^{h}]$, and by virtue of \eqref{9.16.3},
$$
|\delta_{h,r}(v_{h}-g)| \leq 2N_{1}(\bar{\cH}+K
+\|g\|_{C^{1,2}(\Omega_T)})
$$
in $(0,T)\times\big(\Omega\setminus\Omega^{2h}\big)$. In that
case   \eqref{1.14.2} obviously holds.

Therefore, we assume that $Q^{o} \ne\emptyset$ and owing to Lemma
\ref{lemma 9.14.1} conclude that
\begin{equation}
                                              \label{9.16.7}
\partial_t v_h+P_{h} [v_{ h}]=K
\end{equation} in $Q^{o}$. Furthermore, \eqref{2.25.3}
implies that
\begin{equation}
                                              \label{9.22.3}
\partial_t v_h+P_{h}[v_{h}]\leq K
\end{equation}
in $(0,T)\times\Omega^{h}$.
Now use again the mean value theorem to conclude that
$$
\delta_{h,r}P_{h}[v_{h}]= a_{k}\Delta_{h,k}(\delta_{h,r}v_{h})
+b_{k}\delta_{h,k}(\delta_{h,r}v_{h})-c(\delta_{h,r}v_{h})
$$
for some functions $a_{k} $, $b_{k} $, and $c $ satisfying \eqref{10.18.4}.
In addition,
$$
\delta_{h,r}\big(\partial_t v_h+P_{h}[v_{h}]\big)\leq 0
$$
in $Q^{o}$ owing to
\eqref{9.16.7} and \eqref{9.22.3}, that is in
$Q^{o}$
$$
\partial_t \delta_{h,r}v_{h}+a_{k}\Delta_{h,k}(\delta_{h,r}v_{h})
+b_{k}\delta_{h,k}(\delta_{h,r}v_{h})-c(\delta_{h,r}v_{h}) \leq0.
$$

For small enough $h_{0}$  the operator $\partial_t+a_{k}\Delta_{h,k}
+b_{k}\delta_{h,k} -c$ with $h\in(0,h_{0}]$ respects the maximum
principle and therefore by Lemma \ref{lemma 7.21.1}
\begin{equation}
                                              \label{9.22.4}
\sup_{Q^{o}}(\delta_{h,r}v_{h})_{+}
\leq\sup_{ (0,T] \times[\Omega\cap\Lambda^{h}_{\infty}]
 \setminus Q^{o}}
(\delta_{h,r}v_{h})_{+}.
\end{equation}
While estimating the right-hand side of
\eqref{9.22.4}, notice that if $(t,x)\in
 (0,T] \times[\Omega\cap\Lambda^{h}_{\infty}]
 \setminus Q^{o}$, then one of the following
happens:

(i) $t=T$,

(ii) $t<T$ and $(t,x)\notin(0,T)\times\Omega^{2h}$,

(iii) $t<T$ and $(t,x)\in(0,T)\times\Omega^{2h}$ and
$(\delta/2)|\delta_{h,r}v_{h}| \leq \bar{\cH}+K$.

In case (i) we  have $v_h=g$,  in case (ii)
we may certainly use
\eqref{1.14.1},   and in case (iii) the estimate we need
is just given.

  It follows
that the right-hand side of \eqref{9.22.4} is dominated by
the right-hand side of
\eqref{1.14.2},
  if $h\in(0,h_{0}]$
and $h_{0}>0$  is sufficiently small.

Thus, in all situations
$$
(\delta_{h,r}v_{h})_{+}\leq N(\bar{\cH}+K
+\|g\|_{C^{1,2}(\Omega_T)})
$$
on $\Omega_T$. Upon replacing here $r$ with
$-r$, we get
$$
T_{h,-\ell_{r}}(\delta_{h,r}v_{h})_{-}\leq N(\bar{\cH}+K
+\|g\|_{C^{1,2}(\Omega_T)})
$$
in $(0,T)\times\Omega^{h}$, which after being combined
with the previous estimate proves \eqref{1.14.2}
in $\Omega^{h}$. In
$(0,T)\times\big(\Omega\setminus\Omega^{h}\big)$
estimate \eqref{1.14.2} has been established above.

Finally, we prove \eqref{eq10.12}.
This time denote
$$
Q^{o} =
\{(t,x)\in(0,T)\times[\Omega^{h}
\cap\Lambda^{h}_{\infty}]:
(\delta/4)\sum_k |\Delta_{h,k}v_{h}| > \bar{\cH}+K\}.
$$
Since $v_h$ satisfies \eqref{2.25.3}-\eqref{2.25.4},
estimate \eqref{eq10.12} obviously holds on $\Big(\{T\}\times \Omega^h\Big)\cup\Big((0,T)\times(\Omega\setminus \Omega^{h})\Big)$.
On $(0,T)\times[\Omega^{h}
\cap\Lambda^{h}_{\infty}]\setminus   Q^{o}$, we have
$$
(\delta/4)\sum_k |\Delta_{h,k}v_{h}| \le \bar{\cH}+K,
$$
which together with \eqref{1.14.1}, \eqref{1.14.2}, and
\eqref{2.25.3} implies that \eqref{eq10.12}
 holds on
$(0,T)\times[\Omega
\cap\Lambda^{h}_{\infty}]\setminus   Q^{o} $. Therefore, it
remains to establish  \eqref{eq10.12}
on $Q^{o}$ assuming that $Q^{o}\ne
\emptyset $.

Observe that equation
\eqref{9.16.7} holds on $Q^o$ by the same reasons as above.
 Every $x$-section of $Q^{o}$
is the union of open intervals  on which $\partial_{t}v_{h}$
is Lipschitz continuous   by virtue of \eqref{9.16.7}.
By subtracting the left-hand sides of
\eqref{9.16.7} evaluated at points $t$ and $t+\varepsilon$,
then transforming the difference by
using Hadamard's formula, and finally dividing by
$\varepsilon$ and letting $\varepsilon\to0$,
 we get that there exist functions $a_{k},b_{k},c$
satisfying \eqref{10.18.4} such that
on every $x$-section of $Q^{o}$ (a.e.)  we have
\begin{equation*}
\partial_t (\partial_t v_h) +[a_k\Delta_{h,k} +b_k
\delta_{h,k}-c](\partial_t v_h) =0.
\end{equation*}
 As above, owing to the continuity
of $\partial_{t}v_{h}$ with respect to $t\in[0,T]$
and Lemma \ref{lemma 7.21.1}, we conclude
$$
\sup_{Q^{o}}|\partial_t
v_h|\leq\sup_{ (0,T] \times[\Omega\cap\Lambda^{h}_{\infty}
] \setminus  Q^{o}}
|\partial_t v_h|,
$$
which implies \eqref{eq10.12} on $Q^{o}$. The lemma is proved.
\end{proof}

\begin{lemma}
                                          \label{lemma 9.16.1}
Suppose that Assumptions \ref{assumption 9.23.01} (i), (ii), (iv) are
satisfied. Assume also that all vectors in $\Lambda$ have rational
coordinates. Then there  are
constants $h_{0}>0$
and $N$ such that for
all $h\in(0,h_{0}]$ and $|r|=1,...,m$ \begin{equation}
                                              \label{9.16.6}
 (\rho-6\lambda h) |\Delta_{h,r}  v_{ h}|\leq N(\bar{\cH}+K
+\|g\|_{C^{1,2}(\Omega_T)}) \end{equation} on
$(0,T)\times \bR^{d}$ (we remind the reader that $\lambda$ is the radius of
$B$).
\end{lemma}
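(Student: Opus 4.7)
The plan is to derive a weighted lower bound on $\Delta_{h,r}v_{h}$ from the Bernstein-type inequality of Theorem~\ref{theorem 9.18.1}, and then recover the matching upper bound from the equation itself. Since $\rho(x)-6\lambda h\leq 0$ off $\Omega^{6\lambda h}$, the conclusion is trivial there, so it suffices to prove \eqref{9.16.6} on $(0,T)\times(\Omega^{6\lambda h}\cap\Lambda_{\infty}^{h})$. At each such point I would split by which branch realizes the maximum in $H_{K,h}[v_{h}]=\max(H_{h}[v_{h}],P_{h}[v_{h}]-K)$: set $\mathcal{A}=\{H_{K,h}[v_{h}]=P_{h}[v_{h}]-K\}$ and $\mathcal{B}=\{H_{h}[v_{h}]>P_{h}[v_{h}]-K\}$.

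On $\mathcal{B}$, Lemma~\ref{lemma 9.14.1} yields $P_{h}[v_{h}]-K<H_{h}[v_{h}]\leq P_{h}[v_{h}]-(\delta/4)\sum_{k}(|\Delta_{h,k}v_{h}|+|\delta_{h,k}v_{h}|)+\bar{\cH}$, and absorbing the first-order sum via \eqref{1.14.2} already gives $\sum_{k}|\Delta_{h,k}v_{h}|\leq N(\bar{\cH}+K+\|g\|_{C^{1,2}(\Omega_{T})})$ on $\mathcal{B}$ with no weight needed.

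On $\mathcal{A}$ the equation becomes $\partial_{t}v_{h}+P_{h}[v_{h}]-K=0$, while globally $\partial_{t}v_{h}+(P_{h}[v_{h}]-K)\leq\partial_{t}v_{h}+H_{K,h}[v_{h}]=0$ by definition of the max. Hence $v_{h}$ fits the hypotheses of Theorem~\ref{theorem 9.18.1} with $\cP=P_{h}-K$ (a max over the closed bounded Pucci parameter set of this section), with $Q^{o}$ taken as the relative interior of $\mathcal{A}\cap((0,T)\times(\Omega^{h}\cap\Lambda_{\infty}^{h}))$, and with $f(\alpha,t,x)\equiv -K$, so $K_{1}=K_{2}=0$. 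For each fixed $(t^{*},x^{*})\in\mathcal{A}$ with $\rho(x^{*})\geq 6\lambda h$ I would localize by picking $\eta\in C^{\infty}(\bR^{d})$ with $|\eta|\leq 1$, $\eta(x^{*})=1$, $\operatorname{supp}\eta\subset B(x^{*},\rho(x^{*})/3)$, and hence $\|\eta'\|\leq N/\rho(x^{*})$, $\|\eta''\|\leq N/\rho(x^{*})^{2}$. Choosing $\nu$ of order $\rho(x^{*})^{-2}$, Theorem~\ref{theorem 9.18.1} gives
\[
[(\Delta_{h,r}v_{h})^{-}(t^{*},x^{*})]^{2}\leq\sup_{Q\setminus Q^{o}}\zeta^{2}[(\Delta_{h,r}v_{h})^{-}]^{2}+N\nu\bar{W}_{r}.
\]
Estimate \eqref{1.14.2} bounds $\bar{W}_{r}\leq N(\bar{\cH}+K+\|g\|_{C^{1,2}})^{2}$, so the last term is at most $N\rho(x^{*})^{-2}(\bar{\cH}+K+\|g\|_{C^{1,2}})^{2}$. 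The supremum is controlled because $\operatorname{supp}\eta\cap(Q\setminus Q^{o})$ lies either in $\mathcal{B}$ (already handled) or on the terminal slice $t=T$ where $v_{h}=g$ and $|\Delta_{h,r}v_{h}|\leq N\|g\|_{C^{1,2}}$. Multiplying through by $\rho(x^{*})^{2}$ and extracting a square root yields $\rho(x^{*})(\Delta_{h,r}v_{h})^{-}(t^{*},x^{*})\leq N(\bar{\cH}+K+\|g\|_{C^{1,2}(\Omega_{T})})$.

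For the upper bound on $(\Delta_{h,r}v_{h})^{+}$, in $\mathcal{A}$ the quantity $P_{h}[v_{h}]=K-\partial_{t}v_{h}$ is bounded by $N(\bar{\cH}+K+\|g\|_{C^{1,2}})$ via \eqref{eq10.12}; solving the explicit Pucci expression $P_{h}[v_{h}]=2\delta^{-1}\sum_{k}(\Delta_{h,k}v_{h})^{+}-(\delta/2)\sum_{k}(\Delta_{h,k}v_{h})^{-}+(\text{first order in }v_{h},\delta_{h}v_{h})$ for the positive sum, multiplying by $\rho$, and inserting the weighted lower bound together with \eqref{1.14.2}, I obtain $\rho(\Delta_{h,r}v_{h})^{+}\leq N(\bar{\cH}+K+\|g\|_{C^{1,2}})$ on $\mathcal{A}$; on $\mathcal{B}$ the bound is immediate from the second paragraph. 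Combining produces \eqref{9.16.6}. The main obstacle I anticipate is the third paragraph: verifying that $\cP=P_{h}-K$ fits Theorem~\ref{theorem 9.18.1}'s framework (which hinges on $f\equiv -K$ being constant in $(\alpha,t,x)$, so $K_{1}=K_{2}=0$), and, more delicately, choosing the cutoff radius proportional to $\rho(x^{*})$ so that the final weight is linear in $\rho$ rather than quadratic.
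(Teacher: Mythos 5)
You follow the paper's proof quite closely: the weighted lower bound on $\Delta_{h,r}v_h$ comes from Theorem~\ref{theorem 9.18.1} with $f\equiv-K$ (hence $K_1=K_2=0$), a cutoff whose support has radius comparable to $\rho(x^*)$, and $\nu\sim\rho(x^*)^{-2}$; the upper bound is read off the Pucci inequality together with \eqref{eq10.12}, \eqref{1.14.2}, and \eqref{1.14.1}. Your decomposition into $\mathcal{A}$ (Pucci branch active) and $\mathcal{B}$ (strict inequality) is the mirror image of the paper's, which defines $Q^{o}$ directly as the set where $(\delta/4)|\Delta_{h,r}v_h|>\bar{\cH}+K$ and then invokes Lemma~\ref{lemma 9.14.1} to see that the Pucci branch is active there; both work, and you correctly observe that on $\partial\mathcal{A}$ the bound from Lemma~\ref{lemma 9.14.1} still controls the second differences.

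There is one genuine gap. You set $Q^o$ to be the relative interior of $\mathcal{A}\cap\big((0,T)\times(\Omega^{h}\cap\Lambda_\infty^h)\big)$. But then $Q$ contains the spatial neighbors $(t,x+h\ell)$ of points $(t,x)\in Q^o$, and for $x$ near $\partial\Omega^{h}$ the neighbor $x+h\ell$ leaves $\Omega^{h}$; there $P_h[v_h]$ is not even defined (it would sample $v_h$ outside $\Omega$), and inequality \eqref{9.22.3} has only been established on $(0,T)\times\Omega^{h}$, so hypothesis \eqref{eq4.48} of Theorem~\ref{theorem 9.18.1} cannot be verified on all of $Q\setminus Q^o$. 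The paper avoids this by restricting $Q^o$ to $(0,T)\times\Omega^{3h}$, which forces $Q\subset(0,T)\times\Omega^{2h}$. You should do the same; it costs nothing, since at points with $\rho(x)<3\lambda h$ the left-hand side of \eqref{9.16.6} is nonpositive, and your cutoff is already supported where $\rho\geq 4\lambda h$. With that correction the proposal is a valid proof along the same lines as the paper's.
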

\begin{proof}
As in the proof of Lemma \ref{lemma 9.14.2} we will focus
on proving \eqref{9.16.6} in
$(0,T)\times
\Lambda_{\infty}^{h} $.  Then
for a fixed $r$ define
$$
Q^{o}:=\{(t,x)\in
(0,T)\times[\Omega^{3h}\cap
\Lambda_{\infty}^{h}] :(\delta/4)  |\Delta_{h,r}
v_{ h}(t,x)|> \bar{\cH}+ K \}.
$$ If $t\in (0,T)$, and $x\in  \Lambda^{h}_{\infty}$ is such
that $(t,x) \not\in Q^{o}$, then either $x\not\in \Omega^{3h}$, so that
$\rho(x)\leq 3\lambda h$ and \eqref{9.16.6} holds, or else $x \in
\Omega^{3h}$ but $(\delta/4)  |\Delta_{h,r} v_{ h}(t,x)|\leq  \bar{\cH}+
K$, in which case \eqref{9.16.6} holds again.

Thus we need only prove \eqref{9.16.6} on $Q^{o}$ assuming, of course,
that $Q^{o} \ne\emptyset$. By Lemma \ref{lemma 9.14.1} we have that \eqref{9.16.7} holds in
$Q^{o}$ and \eqref{9.22.3} holds in $Q\setminus Q^{o}$.

To proceed further observe
 a standard fact that there are constants $\mu_{0}>0$
and $N\in[0,\infty)$ depending only on $\Omega$  such that
 for any $\mu\in(0,\mu_{0}]$ there exists an $\eta_{\mu}
\in C^{\infty}_{0}(\Omega)$ satisfying
$$
\eta_{\mu}=1\quad\text{on}\quad \Omega^{2\mu},\quad
\eta_{\mu}=0\quad\text{outside}\quad \Omega^{\mu},
$$
\begin{equation}
                                                    \label{9.22.6}
|\eta_{\mu}|\leq1,\quad |D\eta_{\mu}|\leq
N/\mu,\quad|D^{2}\eta_{\mu}|\leq N/\mu^{2}.
\end{equation}
By Theorem \ref{theorem 9.18.1} and Lemma \ref{lemma 9.14.2}
 there are  constants $N$ and $h_{0}>0$
such that, for any number $\nu$ satisfying
$$
\nu\geq
N(\|\eta'_{\mu}\|_{h}+\|\eta'_{\mu}\|_{h}^{2}+ \|\eta''_{\mu}\|_{h}),
$$
we have in $Q^{o}$ that
$$
\eta_{\mu}^{4}[ (
\Delta_{r}v_{h})^{-}]^{2}\leq \sup_{  Q\setminus
Q^{o}}\eta_{\mu}^{4}[ (
\Delta_{r}v_{h})^{-}]^{2} + N(\nu+1) (\bar{\cH}+
K+\|g\|_{C^{1,2}(\Omega_T)})^{2}
$$
 if $h\in(0,h_{0}]$. We may certainly take $h_{0}$
smaller than $\mu_{0}/3$.
In light of \eqref{9.22.6}
one can take $\nu=N\mu^{-2}$ for an appropriate $N$ and then
$$
 \eta_{\mu}^{4}[ ( \Delta_{r}v_{h}(t,x))^{-}]^{2}\leq
\sup_{  Q\setminus Q^{o}}\eta_{\mu}^{4}[ ( \Delta_{r}v_{h})^{-}]^{2} + N \mu ^{-2}(\bar{\cH}
+ K+\|g\|_{C^{1,2}(\Omega_T)})^{2}
$$
for $(t,x)\in Q^{o}$.
While estimating the last supremum
we will only concentrate on $\mu\in[ 3h,\mu_{0}]$
($\ne\emptyset$), when
$\eta_{\mu}=0$ outside $\Omega^{3h}$. In that case, for any $(s,y)\in
  Q\setminus Q^{o}$, either $y\notin\Omega^{3h}$ implying that
$$
\eta_{\mu}^{4}[ ( \Delta_{h,r}v_{h})^{-}]^{2}(s,y)=0,
$$
or  $y \in\Omega^{3h}\cap \Lambda_{\infty}^{h}$ but
\begin{equation}
                                          \label{7.23.4}
(\delta/4)  |\Delta_{r}
v_{ h}(s,y)|\leq  \bar{\cH}+ K,
\end{equation}
or else ($(s,y)\notin Q^{o}$ and)
there is a sequence $s_{n}\uparrow s$ such that
$(s_{n},y)\in Q^{o}$.

The third possibility splits into two cases:
1) $s=T$, 2) $s<T$. In case 1 we have
$$
|\Delta_{r}
v_{ h}(s,y)|=|\Delta_{r}
g(s,y)|\leq N\|g\|_{C^{1,2}(\Omega_T)}.
$$
In case 2, owing to the continuity of $\Delta_{r}
v_{ h}(t,y)$ with respect to $t$, estimate
\eqref{7.23.4} holds again.

It follows that as long as $h\in(0,h_{0}]$, $(t,x)\in Q^{o}$, and  $\mu\in
[3h,\mu_{0}]$ we have
\begin{equation}
                                                   \label{9.22.7}
\eta_{\mu}^{4}[ ( \Delta_{r}v_{h})^{-}(t,x)]^{2}\leq
 N \mu ^{-2}(\bar{\cH}+ K+\|g\|_{C^{1,2}(\Omega_T)})^{2}.
\end{equation}

If $x$ is such that $\rho(x)\geq 6\lambda h$, take
$\mu=\mu_{0}\wedge(\rho(x)/(2\lambda))$, which is bigger than $3h$
provided that $h\leq\mu_{0}/3$. In that case also $ \rho(x)\geq 2\lambda \mu$, so that $\eta_{\mu}(x)=1$ and
we conclude from \eqref{9.22.7} that
$$ \rho(x)( \Delta_{r}v_{h})^{-}(t,x)
\leq N(\bar{\cH}+ K+\|g\|_{C^{1,2}(\Omega_T)}),
$$
\begin{equation}
                                                   \label{9.22.8}
(\rho(x)-6\lambda h)( \Delta_{r}v_{h})^{-}(t,x) \leq N(\bar{\cH}+
K+\|g\|_{C^{1,2}(\Omega_T)})
\end{equation}
for $(t,x)\in Q^{o}$ such that
$\rho(x)\geq 6\lambda h$. However, the second relation in \eqref{9.22.8}
is obvious for $\rho(x)\leq 6\lambda h$.

As a result of all the above arguments we see that \eqref{9.22.8} holds
in $(0,T)\times  \Lambda_{\infty}^{h}$ for any $r$ whenever $h\in(0,h_{0}]$.

Finally, since $\partial_t v_h+P_{h}[v_{h}]\leq K$ in $(0,T)\times \Omega^{h}$, we have that
$$
2\delta ^{-1}\sum_{r}(\Delta_{r}v_{h})_{+} \leq-\partial_t v_h+(\delta
/2)\sum_{r}(\Delta_{r}v_{h})_{-}
$$
$$ -2\delta
^{-1}\sum_{r\geq1}|\delta_{r}v_{h}| +(\delta/2) (v_{h})_{+}-2\delta
^{-1} (v_{h})_{-}+K,
$$
which after being multiplied by $\rho-6h$ along
with \eqref{9.22.8} and Lemma \ref{lemma 9.14.2} leads to \eqref{9.16.6}
on $(0,T)\times\Lambda_{\infty}^{h}$. As  is explained at the beginning of the proof,
this finishes proving the lemma.
\end{proof}

Mimicking the proof of Corollary 2.7 of \cite{Kr12.1}, we
obtain the following corollary from \eqref{1.14.2} and
\eqref{9.16.6}. Note that here Assumptions \ref{assumption
9.23.01}(iii) plays a crucial role  and only the
Lipschitz continuity in $x$ is needed.
\begin{corollary}
                                        \label{lem11.24}
Suppose that Assumption \ref{assumption 9.23.01} is
satisfied and all vectors in $\Lambda$ have rational
coordinates. Then there are constants $h_{0}>0$ and $M$, which may depend  on $N'$, such that for
all $h\in(0,h_{0}]$, $t \in (0,T]$, and $x,y \in\Omega$, we have
$$
|v_{h}(t,x)-v_{h}(t,y)|\leq
M(|x-y|+h).
$$
\end{corollary}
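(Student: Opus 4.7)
The plan is to bound $|v_h(t, x+z) - v_h(t, x)|$ by $M(|z| + h)$ via a shift-and-subtract argument, comparing $v_h$ with its $z$-shift through the maximum principle of Lemma~\ref{lemma 7.21.1}. The case of $|z|$ larger than some fixed $h_0$ is immediate from the bound $\|v_h\|_\infty \le N$ in Lemma~\ref{lemma 9.14.2}, so I focus on small $|z|$. I will set $w(t, y) := v_h(t, y+z) - v_h(t, y)$ and observe that on the overlap $D_z := \{(t, y) \in (0,T) \times \Omega^h : y + z \in \Omega^h\}$, both $v_h(\cdot, \cdot)$ and $v_h(\cdot, \cdot + z)$ satisfy \eqref{2.25.3}, with the finite differences of the shifted function evaluated on the same lattice as those of $v_h$.

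Applying Hadamard's formula to $\max(\cH, \cP - K)$, as in Lemma~\ref{lemma 10.4.2} and Remark~\ref{remark 2.28.1}, gives a linear equation
\[
\partial_t w + \sum_{|k|=1}^{m} a_k \Delta_{h,k} w + \sum_{k=1}^{m'} b_k \delta_{h,k} w - c w = f_z \quad \text{in } D_z,
\]
where the coefficients satisfy the hypotheses of Lemma~\ref{lemma 7.21.1} after shrinking $h_0$ in terms of $\delta$, and $f_z(t,y) = \cH(u_z, t, y) - \cH(u_z, t, y+z)$ with $u_z$ collecting the value of $v_h(\cdot, \cdot+z)$ together with its first- and second-order finite differences at $(t, y)$. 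Only the Lipschitz continuity of $\cH$ in $x$ from Assumption~\ref{assumption 9.23.01}(iii) enters here, yielding $|f_z| \le N'|z|(1 + |u_z|)$, and combining with Lemmas~\ref{lemma 9.14.2} and \ref{lemma 9.16.1} gives $|f_z| \le N N' |z|/\rho(y+z)$ on $D_z$. On the parabolic boundary of $D_z$, $|w|$ is controlled by $M(|z| + h)$: at the terminal time and where both $y, y+z$ lie in $\Omega \setminus \Omega^h$ the bound follows from $\|g\|_{C^{1,2}}|z|$, while in the transition layer where exactly one of $y, y+z$ crosses $\partial \Omega^h$, the estimate \eqref{1.14.1} combined with $\rho \le \lambda h + |z|$ in that layer gives the same bound.

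The hard part is the singular forcing $|f_z| \sim |z|/\rho$ near $\partial \Omega$, which cannot be absorbed by Lemma~\ref{lemma 7.21.1} on its own. I plan to resolve this by introducing a barrier function $\Phi$—either the one from Lemma~8.8 of \cite{Kr11} already used in the proof of Lemma~\ref{lemma 9.14.2}, or a suitable modification such as a small positive power of $\rho$—which is nonnegative, uniformly bounded on $\Omega$, and satisfies $L_h \Phi \le -1/\rho$ for any linear operator $L_h$ of the structural form appearing above. Then $w + N'|z|\Phi$ obeys a homogeneous one-sided inequality to which Lemma~\ref{lemma 7.21.1} applies directly, giving $|w| \le M(|z| + h)$ on $D_z$, and the boundary estimates extend this to all of $(0, T] \times \Omega$, which upon setting $x = y$ and $y = x + z$ yields the desired inequality.
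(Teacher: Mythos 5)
Your shift-and-subtract strategy is the right one in outline and uses precisely the ingredients the paper points to: the first-order estimate \eqref{1.14.2}, the weighted second-order estimate \eqref{9.16.6}, the Lipschitz dependence in $x$ from Assumption \ref{assumption 9.23.01}(iii), and the discrete maximum principle of Lemma \ref{lemma 7.21.1}. Your identification of the singular forcing $|f_z|\lesssim N'|z|/\rho$ as the main obstruction, and of a $1/\rho$-absorbing barrier as the remedy, is correct, and the boundary-layer estimates you sketch (using $|v_h-g|\le N\rho$ from \eqref{1.14.1} and $\rho\le\lambda h+|z|$ in the transition layer) go through.

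The proposal does, however, leave the decisive step unverified. The barrier from Lemma 8.8 of \cite{Kr11}, which the paper uses elsewhere, satisfies only $L_h\Phi\le -1$ and therefore cannot absorb a forcing of size $1/\rho$; so your first alternative simply does not apply, and everything rests on the second, $\Phi\sim\rho^{\alpha}$ with $\alpha\in(0,1)$. For that one must check, uniformly in $h\in(0,h_0]$, that
$\sum_k a_k\Delta_{h,k}\Phi+\sum_k b_k\delta_{h,k}\Phi\le -c/\rho$ on $\Omega^{h}$ for all admissible coefficients. This is genuinely delicate at the inner edge $\rho\approx\lambda h$, where the stencil of $\Delta_{h,k}$ reaches $\partial\Omega$, the Taylor (or integral) representation of $\Delta_{h,k}$ in terms of $D^2_{(\ell_k)}\Phi$ is not available because $\Phi$ has a singular gradient at $\rho=0$, and one must instead argue from the strict concavity of $r\mapsto r^{\alpha}$ combined with the fact that at least one $\ell_k$ (some $\pm e_i$) has normal component bounded away from zero; one then also needs to cut off $\rho^{\alpha}$ away from $\partial\Omega$ and add a multiple of the Lemma 8.8 barrier to control the region where $\rho$ is bounded below. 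Without this the argument does not close: a naive application of Lemma \ref{lemma 7.21.1}, even restricted to $\{\rho\ge M_0(|z|+h)\}$, leaves an $O(1)$ residual, and optimizing the cutoff in $M_0$ yields only a H\"older-$\tfrac12$ modulus, not the Lipschitz bound $M(|x-y|+h)$ that the corollary asserts. So the outline matches the paper's intended route (via Corollary 2.7 of \cite{Kr12.1}), but the barrier verification is exactly where the real work sits and it is only asserted here.
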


\begin{proof}[Proof of Theorem \ref{theorem 9.14.1}]
The theorem is proved in a the same way as Theorem 8.10
of \cite{Kr11} on the basis of Lemmas \ref{lemma 9.14.2} and
\ref{lemma 9.16.1} and the fact that the derivatives of $v$ are weak limits of
finite differences of $v_{h}$ as $h\downarrow0$.  Thanks to Lemma \ref{lemma 9.14.2}, for each $h$ sufficiently small and $x\in \Omega$,
$v_h(t,x)$ are uniformly bounded and equicontinuous in
$t\in [0,T]$. Let $Q$ be the subset of $\Omega$
consisting of points with rational coordinates. By
the Arzela--Ascoli theorem and Cantor's diagonal argument,  there is a sequence $h_n\to 0$ such that $v_{h_n}(t,x)$ converges
uniformly
on $[0,T]\times Q$. The limit  function $v(t,x)$ satisfies
\begin{equation}
                                    \label{eq12.11}
|v(t,x)-v(t,y)|\le M|x-y|
\end{equation}
for any $t\in [0,T]$ and $x,y\in Q$, where $M$ is from Corollary \ref{lem11.24}. Since $Q$ is dense in $\Omega$,
\eqref{eq12.11} allows us to extend $v$ to $\bar \Omega_T$,
with the extension denoted again by $v$ being continuous in $x$. Note that $v(t,x)$ is Lipschitz in $t$ with the Lipschitz constant bounded by the right-hand side of \eqref{eq10.12}, which is independent on $N'$. Moreover, by \eqref{1.14.1} $v = g$ on $\partial'\Omega_T$ and  $v_{h_n} (t,x)$ converges to $v(t,x)$ uniformly on $\Omega_T$.

Next we estimate the second term on the left-hand side of \eqref{1.14.3}. For any $\zeta\in C_0^\infty(\Omega_T)$ and for sufficiently small $h>0$, from \eqref{1.14.2} we have
$$
\left|\int_{Q_T}v_h\delta_{h,r}\zeta\,dx\,dt\right|
=\left|\int_{Q_T}\delta_{h,-r}v_h\zeta\,dx\,dt\right|\le  N(\bar{\cH}+K+\|g\|_{C^{1,2}(\Omega_T)})\max_{Q_T}|\zeta|
$$
for any $r=\pm 1,\ldots,\pm m$, where $N$ is independent of $h$. Passing to the limit as $h=h_n\to 0$, we obtain
$$
\sup_{Q_T}|v_{(\ell_r)}|\le N(\bar{\cH}+K+\|g\|_{C^{1,2}(\Omega_T)}).
$$
Similarly, using \eqref{9.16.6} we get
\begin{equation}
                                        \label{eq2.26}
\sup_{Q_T}|\rho
v_{(\ell_r)(\ell_r)}|
\le N(\bar{\cH}+K+\|g\|_{C^{1,2}(\Omega_T)}).
\end{equation}
Because $\pm e_{i},\pm (e_{i}+ e_{j}), e_{i} -e_{j}\in \Lambda$, $i,j=1,..,d$, using the identity
$$
2D_{ij}v=v_{(e_{i}+ e_{j})(e_{i}+ e_{j})}
-v_{(\ell_i)(\ell_i)}-v_{(\ell_j)(\ell_j)} ,
$$
we conclude from \eqref{eq2.26} that
\begin{equation*}
\sup_{Q_T}|\rho D_{ij}v|\le N(\bar{\cH}+K+\|g\|_{C^{1,2}(\Omega_T)}).
\end{equation*}
This completes the proof of \eqref{1.14.3}.

Finally, we show that $v$ is a unique solution of \eqref{9.14.2} with the terminal-boundary condition $v=g$ on $\partial'\Omega_T$. Since
$v\in W^{1,2}_{\infty,\text{loc}}(\Omega_T)$, at almost any point $(t_0,x_0)\in\Omega_{T}$ we have (see, for instance, Appendix 2
in \cite{Kr85})
$$
v(t_0+s,x_0+y)=P^{t_0,x_0}(s,y)+o(|s|+|y|^2),
$$
where
$$
P^{t_0,x_0}(s,y)=v(t_0,x_0)+y^i D_i v(t_0,x_0)+\frac 1 2 y^iy^j D_{ij}v(t_0,x_0)+s\partial_t v(t_0,x_0).
$$
Take $\varepsilon > 0$ and observe that for all small $r > 0$, $|o(8r^{2})|\le 3\varepsilon r^2$, which implies that for
$$
u(t,x) := P^{t_0,x_0} (t-t_0,x -x_0)+\varepsilon( t-t_0+| x - x_0|^2 -
 r^{2})
$$
we have
$$
u(t,x)\ge v(t,x) -| o(
8r^{2}) | +3\varepsilon r^2\ge v(
t,x)
$$
on $\partial' D$, where
$$
D = \{ (t,y) : t_0<t<t_0+4r^2,| y - x_0| < 2r \}.
$$ We modify $u$ outside $D$ so that it is smooth with bounded
derivatives in $\bR^{d+1}$. It then follows from the comparison
principle that for small enough $h$
\begin{equation}
                                        \label{eq3.22}
v_h(t_0,x_0)-u(t_0,x_0)\le \delta^{-1}
\sup_D(\partial_t u+H_{K,h}[u])_+
+\sup_{\partial_h D \cup \partial' D} (v_h-u)_+,
\end{equation}
where $\partial_h D=(t_{0},t_{0}+4r^{2})
\times\{y: 2r-\lambda h\leq|y-x_{0}|\leq 2r\}$.
Observe that $H_{K,h}[u]\to H_{K}[u]$ uniformly
in $D$ as $h\to 0$.
Taking $h=h_n\to 0$ in \eqref{eq3.22}, for sufficiently small $r>0$ we have
$$
\varepsilon r^2=v(t_0,x_0)-u(t_0,x_0)\le \delta^{-1}\sup_{D}(\partial_t u+H_{K}[u])_+.
$$
It follows that for any sufficiently small $r>0$, there is a point $(t_r,x_r)\in \bar D$ such that
\begin{equation}
                                        \label{eq3.45}
\partial_t u(t_r,x_r)+H_K[u](t_r,x_r)>0.
\end{equation}
Note that
\begin{align*}
\partial_t u(t_r,x_r)&=\partial_t v(t_0,x_0)+\varepsilon,\\
\partial_{\ell_k} u(t_r,x_r)&=\partial_{\ell_k} v(t_0,x_0)+O(r),\\
\partial^2_{\ell_k} u(t_r,x_r)&=\partial^2_{\ell_k} v(t_0,x_0)+O(\varepsilon).
\end{align*}
Letting $r\to0$ and then $\varepsilon \to0$ in \eqref{eq3.45}, we reach
$$
\partial_t v(t_0,x_0)+H_K[v](t_0,x_0)\ge 0.
$$
Similarly, we get an opposite inequality by considering
$$
u(t,x)= P^{t_0,x_0} (t-t_0,x -x_0)-\varepsilon( t-t_0+| x - x_0|^2 -
 r^2 ).
$$
Therefore, $v\in \cC^{1,1}(\bar{\Omega}_T)
\cap
W^{1,2}_{\infty,\text{loc}} (\Omega_T) $ is a solution to \eqref{9.14.2} with the terminal-boundary condition $v=g$ on $\partial'\Omega_T$. The uniqueness is a simple consequence of parabolic Alexandrov's estimate. The theorem is proved.
\end{proof}

\mysection{Proof of Theorem \protect\ref{theorem 10.5.1}}
                                      \label{section 12.13.4}

Here we suppose that the assumptions of Theorem \ref{theorem 10.5.1} are
satisfied and take the objects introduced in the end of Section
\ref{section 2.5.1}. Owing to the assumptions of Theorem
\ref{theorem 10.5.1} by Theorem   7.1 of \cite{Kr11} (see the beginning
of its proof in \cite{Kr11})
 there exists a function $\cH(z,t,x)$ defined for
$$
z  = ( z ',z ''),\quad z '= (z
'_{0},...,z '_{d}) \in\bR^{d+1},\quad z ''\in\bR^{m} ,\quad (t,x)\in\bR^{d+1}
$$ such that:

(i) The function $\cH $ is Lipschitz continuous in $z $ with Lipschitz
constant $\hat{\delta}^{-1}$ and there exists a constant $N'$ such that
$$
|\cH(z,t,x)-\cH(z,s,y)|\leq N'
(|t-s|+|x-y|)(1+|z|)
$$ for all $t,s\in \bR$, $x,y\in\bR^{d}$ and
$z$.

(ii) We have $\cH(z,t,x )=H(u,t,x)$ if $z '=u'$ and for all $j=1,...,m$
$$
z
''_{j}=\langle u''l_{j},l_{j}\rangle.
$$
In particular,
$\cH(0,t,x)=H(0,t,x)$ and if $v(t,x)$ is a real-valued function which is twice
differentiable at a point $x\in\bR^{d}$, at this point we have
$$
H[v](t,x)=\cH[v](t,x),
$$ where
$$
\cH[v](t,x)
=\cH(v(t,x),Dv(t,x)
,v_{( l_1)( l_1)}(t,x)
,...,v_{( l_m)( l_m)}v(t,x),t,x).
$$

(iii) At all points $(z,t,x) $ at which $\cH (z,t,x)$ is differentiable with
respect to $z$ we have \begin{equation}
                                                  \label{5.10.1}
|\cH _{z ' _{i}}(z,t,x)|\leq 4\delta ^{-1},\quad i=1,...,d,
\end{equation} \begin{equation}
                                                  \label{5.10.2}
 \delta/4\leq-\cH_{z'_{0}}(z,t,x )\leq 4\delta ^{-1},
\quad \hat{\delta}^{-1}\geq \cH_{z''_{j}}(z,t,x )\geq\hat{\delta},\quad
j=1,...,m . \end{equation}

The proofs in \cite{Kr11} use the fact that \eqref{3.5.1} holds and
yield the function $\cH$ such that, in addition, at all points $(z,t,x) $
at which $\cH (z,t,x)$ is differentiable with respect to $z$ we also have
$$
 |\cH(z,t,x )-\langle z , D_{z} \cH
 (z,t,x )\rangle |\leq 2N_{0}.
$$ However, the latter property of $\cH$ will not be used
 here,
so that we only used assumption \eqref{3.5.1} to be sure that $\cH$ with
the properties (i)-(iii) exists.

The functions $\cH$ from  above  and $\cP$ from
Section \ref{section 2.5.1} are instances of $\cH$ and $\cP$ from
Section \ref{section 10.18.2}. To see this, of course, one has to change
the constant $\delta$ in Section \ref{section 10.18.2}  and renumber the
$l_{i}$'s in Section \ref{section 2.5.1}. We also take into account that
$\hat{\delta}\leq\delta/4$ which allows us to match \eqref{5.10.1} and
\eqref{5.10.2} with the requirements of Assumption \ref{assumption
9.23.01} (i). Furthermore,  $\bar{\cH}=\bar{H}$. Therefore, Theorem
\ref{theorem 9.14.1} is applicable and
  yields a unique solution $v\in \cC^{1,1}(\bar{\Omega}_T)
\cap W^{1,2}_{\infty,\text{loc}} (\Omega_T) $ such that $v=g$ on $\partial' \Omega_T$,
estimates \eqref{1.14.3}, that is \eqref{1.13.1}, hold  true, and
$$
\partial v_t+\max[\cH(v,Dv,v_{(l_{1})(l_{1})},...,v_{(l_{m})(l_{m})},t,x),
$$
$$
\cP(v,Dv,v_{(l_{1})(l_{1})},...,v_{(l_{m})(l_{m})})-K]=0
$$
in $\Omega_{T}$ (a.s.). In light of the construction of
 $\cH$  this equation coincides with \eqref{9.23.2}, so
that
  the only remaining assertions of Theorem
\ref{theorem 10.5.1} to prove are that for $p>d+1$
\begin{equation}
                                                \label{10.19.1}
\|v\|_{W^{1,2}_{p}(\Omega_T)}\leq N_{p}(\bar{H}+K+\|g\|_{W^{1,2}_{p}(\Omega_T)})
\end{equation}
and estimate \eqref{2.28.1} holds. The latter follows
from other assertions of Theorem \ref{theorem 10.5.1} by Remark
\ref{remark 2.28.1}, so that we may concentrate on \eqref{10.19.1}.

Observe that
$$
\partial u_t+\max(H(u,t,x),P(u)-K)=
\partial u_t+ P(u)+G(u,t,x),
$$
where
$$
G(u,t,x)=(H(u,t,x)-P(u)+K)_{+}-K
$$ and, owing to condition \eqref{3.6.4},
$G(u,x)=-K$ if $$ \kappa\big(\sum_{i,j}|u_{ij}|+\sum_{i}|u_{i}|\big)
\geq\bar{H}+K. $$ If the opposite inequality holds, then
\begin{equation}
                                            \label{1.14.4}
|G(u,t,x)|\leq | H(u,t,x)-H(0,t,x)|+|P(u)|+\bar{H}+K \leq N(\bar{H}+K),
\end{equation} where $N$ depends only on $\delta$ and $d$. It follows
that the inequality between the extreme terms in \eqref{1.14.4} holds
for all $u$ and $(t,x)$. This allows us to apply Theorem 1.2 of
\cite{DKL} and shows that \eqref{10.19.1} holds if $v\in
W^{1,2}_{p}(\Omega_T)$. Since $P$ is convex with respect to
$u''$ and $G(v,t,x)$ is bounded, due to Theorem 1.2 of
\cite{DKL} there is a unique solution $w\in
W^{1,2}_p(\Omega_T)$ to the equation $ \partial
w_t+ P(w)=-G(v,t,x)$ with the terminal-boundary condition
$w=g$ on $\partial'\Omega_T$. By uniqueness of
$W^{1,2}_{d+1,\text{loc}} (\Omega_T)\cap
C(\overline{\Omega}_T)$-solutions we obtain
$w= v \in W^{1,2}_{p} (\Omega_T)$
and the theorem is proved.

\end{document}